\newtheorem{thm}{Theorem}[section]
\newtheorem{lem}[thm]{Lemma}
\newtheorem{prop}[thm]{Proposition}
\newtheorem{cor}[thm]{Corollary}
\theoremstyle{definition}
\newtheorem{defn}[thm]{Definition}
\newtheorem{nota}[thm]{Notation}
\newtheorem{rem}[thm]{Remark}
\newtheorem{exam}[thm]{Example}
\newtheorem{method}[thm]{Method}
\newcommand{\bC}{{\mathbb{C}}}
\newcommand{\bR}{{\mathbb{R}}}
\newcommand{\bD}{{\mathbb{D}}}
\newcommand{\bF}{{\mathbb{F}}}
\newcommand{\A}{{\mathcal{A}}}
\renewcommand{\H}{{\mathcal{H}}}
\newcommand{\M}{{\mathcal{M}}}
\renewcommand{\O}{{\mathcal{O}}}
\newcommand{\U}{{\mathcal{U}}}
\renewcommand{\phi}{\varphi}
\newcommand{\fA}{{\mathfrak{A}}}
\newcommand{\fM}{{\mathfrak{M}}}
\newcommand{\fN}{{\mathfrak{N}}}
\newcommand{\fR}{{\mathfrak{R}}}
\newcommand{\qand}{\quad\text{and}\quad}
\newcommand{\qqand}{\qquad\text{and}\qquad}
\newcommand{\proj}{\mathrm{Proj}}
\renewcommand{\Re}{\mathrm{Re}}
\renewcommand{\Im}{\mathrm{Im}}
\newcommand{\tr}{\mathrm{Tr}}
\newcommand{\diag}{\mathrm{diag}}
\newcommand{\sa}{\mathrm{sa}}
\newcommand{\conv}{\mathrm{conv}}
\begin{document}

\nocite{*}

\title{Numerical Ranges in II$_1$ Factors}

\author{Ken Dykema}
\author{Paul Skoufranis}
\address{Department of Mathematics, Texas A\&M University, College Station, Texas, USA, 77843-3368}
\email{kdykema@math.tamu.edu}
\email{pskoufra@math.tamu.edu}

\subjclass[2010]{46L10, 47C15, 47A12, 15A60}
\date{\today}
\keywords{II$_1$ Factors; Numerical Range; Generalized Numerical Range}
\thanks{The first author was supported in part by NSF grant DMS-1202660}

\begin{abstract}
In this paper, we generalize the notion of the $C$-numerical range of a matrix to operators in arbitrary
tracial von Neumann algebras.
For each self-adjoint operator $C$, the $C$-numerical range of such an operator is defined; it is a compact, convex subset of $\bC$.
We explicitly describe the $C$-numerical ranges of several operators and classes of operators.
\end{abstract}

\maketitle

\section{Introduction}

An interesting invariant of an operator is its numerical range.
Given a Hilbert space $\H$ and a bounded linear operator $T : \H \to \H$, the \emph{numerical range} of $T$ is the set of complex numbers
\[
W_1(T) = \{ \langle T\xi, \xi\rangle_\H \, \mid \, \xi \in \H, \left\|\xi\right\|_\H = 1\}.
\]

The Hausdorff-Toeplitz Theorem (see \cites{H1919, T1918}) states that the numerical range of an operator is always a convex subset.  Furthermore, when restricting to finite dimensional $\H$, the numerical range of a matrix is compact and can be used to obtain several interesting structural results, such as that a matrix of trace zero is always unitarily equivalent to a matrix with zeros along the diagonal.

The numerical range of a matrix is often substantially larger than the spectrum and yields cruder information about the matrix.  For example, if $N$ is a normal matrix, then $W_1(N)$ is the convex hull of the eigenvalues of $N$.  Therefore, precise information about the eigenvalues of $N$ cannot be obtained from $W_1(N)$.  

In \cite{H1967}, Paul Halmos proposed a generalization of the numerical range of a matrix.  For each $\xi \in \bC^n$ with $\left\| \xi\right\|_2 = 1$ and $T \in \M_n(\bC)$, we have
\[
\langle T\xi, \xi\rangle_{\bC^n} = \tr(TP_\xi)
\]
where $\tr$ is the (unnormalized) trace and $P_\xi \in \M_n(\bC)$ is the rank one projection onto $\bC \xi$.  Thus, for $T \in \M_n(\bC)$ and $k \in \{1,\ldots, n\}$, the \emph{$k$-numerical range of $T$} defined as
\[
W_k(T) =  \left\{\frac{1}{k} \tr(TP) \, \mid \, P \in \mathcal{M}_n(\mathbb{C}) \mbox{ a projection of rank }k\right\}.
\]

C. A. Berger showed, using the Hausdorff-Toeplitz Theorem and the fact that $W_1(T)$ is convex,
that each $W_k(T)$ is a convex set (see \cite{H1967}*{Solution 211}).
Operators' $k$-numerical ranges have been extensively studied and much is known.
For example \cite{FW1971}*{Theorem 1.2} shows
\[
W_k(T) = \frac{1}{k}\{ \tr(TX) \, \mid \, 0 \leq X \leq I_n, \tr(X) = k\}.
\]
It is clear that the set on the right-hand-side of the above equation is a convex set, yet this did not produce an new proof of Berger's result as \cite{FW1971}*{Theorem 1.2} relied on of Berger's result.
These $k$-numerical ranges provide substantially more information about a matrix than the numerical range alone.
Indeed, if $N\in \M_n(\bC)$ is a normal matrix with eigenvalues $\{\lambda_j\}^n_{j=1}$
listed according to their multiplicities, then, by \cite{FW1971}*{Theorem 1.5}, the $k$-numerical range of $N$ is the convex hull of the set
\begin{align*}
\left\{ \frac{1}{k}\sum_{j \in K} \lambda_j \, \mid \, J \subseteq \{1,\ldots, n\}, |J| = k \right\}.  
\end{align*}
By varying $k$, these sets provide enough information to determine the eigenvalues of $N$ and, thus, to determine
$N$ up to unitary equivalence.

In \cite{W1975}, Westwick analyzed a generalization of the $k$-numerical ranges of a matrix which was later further generalized by Golberg and Straus in \cite{GS1977}.  Given two matrices $C, T \in \M_n(\bC)$, the \emph{$C$-numerical range of $T$} is defined to be the set
\begin{align}
W_C(T) = \{ \tr (TU^*CU) \, \mid \, U \in \M_n(\bC) \text{ a unitary}\}.\label{eq:matrix-C-nr}
\end{align}
It is not difficult to see that if $C_k \in \M_n(\bC)$ is a matrix with $\frac{1}{k}$ along the diagonal precisely $k$ times and zeros elsewhere, then $W_{C_k}(T) = W_k(T)$.  Thus, the $C$-numerical ranges are indeed generalizations of the $k$-numerical ranges.

Using ideas from \cite{H1919}, Westwick in \cite{W1975} demonstrated that if $C \in \M_n(\bC)$ is self-adjoint, then $W_C(T)$ is a convex set.  However, Westwick also showed that if $C = \diag(0, 1, i) \in \M_3(\bC)$, then $W_C(C)$ is not convex.  Based on \cite{W1975} and \cite{GS1977}, in \cite{P1980} Poon gave another proof that the $C$-numerical ranges are convex for self-adjoint $C \in \M_n(\bC)$.  Poon's work gave an alternate description of the $C$-numerical range based on a notion of majorization for $n$-tuples of real numbers.
This notion of majorization is the one appearing in a classical theorem of Schur (\cite{S1923}) and Horn (\cite{H1954}) characterizing the possible diagonal $n$-tuples of a self-adjoint matrix based on its eigenvalues. 

As the notion of majorization has an analogue in arbitrary  tracial
von Neumann algebras, the goal of this paper is to examine $C$-numerical ranges in arbitrary von Neumann algebras.  In light of
the example of Westwick  given above, we will restrict our attention to self-adjoint $C$.  Furthermore, we note that analogues of the $k$-numerical ranges inside diffuse von Neumann algebras have been previously studied in \cites{AAW1999, AA2002, AA2003-1, AA2003-2}.  Consequently, the results contained in this paper are a mixture of generalizations of results from \cites{AAW1999, AA2002, AA2003-1, AA2003-2}, new proofs  of
results in \cites{AAW1999, AA2002, AA2003-1, AA2003-2}, and additional results.
This paper contains a total of six sections, including this one, and is structured as follows.

Section \ref{sec:Defns-and-Basic-Results} begins by recalling a notion of majorization for elements of $L^\infty[0,1]$.  The generalization of $C$-numerical ranges to tracial von Neumann algebras is then obtained by applying majorization to eigenvalue functions of self-adjoint operators.  After many basic properties of $C$-numerical ranges are demonstrated, several important results, such as the fact that $C$-numerical ranges are independent of the von Neumann algebra under consideration, are obtained.  Of importance are the results that $C$-numerical ranges are always compact, convex sets of $\bC$ and, if one restricts to type II$_1$ factors, one can define $C$-numerical ranges using the closed unitary orbit of $C$ instead of the notion of majorization.  In addition, we demonstrate the $C$-numerical range of $T$ is continuous in both $C$ and $T$, and  we
demonstrate results from \cites{AAW1999, AA2002, AA2003-1, AA2003-2}  that follow immediately from this different view.

Section \ref{sec:SA} is dedicated to describing the $C$-numerical ranges of self-adjoint operators via eigenvalue functions.  This is particularly important for Section \ref{sec:Computing} which demonstrates a method for computing $C$-numerical ranges of operators based on knowledge of $C$-numerical ranges of self-adjoint operators.  This is significant as numerical ranges of matrices are often difficult to compute (see \cite{KRS1997} for the $3 \times 3$ case). 

Section \ref{sec:Examples} computes $\alpha$-numerical ranges (i.e. the generalization of the $k$-numerical range of a matrix) for several operators. Although computing the $k$-numerical ranges of a matrix is generally a hard task, there are several interesting examples of operators in II$_1$ factors whose $\alpha$-numerical ranges can be explicitly described.  In particular, we demonstrate the existence of normal and non-normal operators whose $\alpha$-numerical ranges agree for all $\alpha$.

Section \ref{sec:Diagonals} concludes the paper by examining the relationship between $\alpha$-numerical ranges and conditional expectations of operators onto subalgebras.  In particular, we demonstrate that a scalar $\lambda$ is in the $\alpha$-numerical range of an operator $T$ in a II$_1$ factor if and only if there exists diffuse abelian von Neumann subalgebra $\A$ such that the trace of the spectral projection of the expectation of $T$ onto $\A$ corresponding to the set $\{\lambda\}$ is at least $\alpha$.

\section{Definitions and Basic Results}
\label{sec:Defns-and-Basic-Results}

In this section, we generalize the notion of the $C$-numerical range of a matrix to tracial von Neumann algebras (for self-adjoint $C$) thereby obtaining more general numerical ranges than those considered in \cites{AA2002, AA2003-1, AA2003-2, AAW1999}.   The $C$-numerical range of an operator is a compact, convex set defined using a notion of majorization for eigenvalue functions of self-adjoint operators and is described via an equation like equation (\ref{eq:matrix-C-nr}) inside II$_1$ factors.  Many properties of $C$-numerical ranges will be demonstrated including continuity results and  lack of dependence on the von Neumann algebra considered.

Throughout this paper, $(\fM,\tau)$ will denote a von Neumann algebra $\fM$ possessing a normal, faithful, tracial state,
with $\tau$ such a state.
We will call such a pair {\em a tracial von Neumann algebra}.
Furthermore, $\proj (\fM)$ will denote the set of projections in $\fM$ and $\fM_{\sa}$ will be used to denote the  set of
self-adjoint elements of $\fM$.

To begin, we will need a concept whose origin is due to Hardy, Littlewood, and P\'{o}lya.
\begin{defn}[see \cite{HLP1929}]
Let $f, g\in L^\infty[0,1]$.  It is said that $f$ \emph{majorizes} $g$, denoted $g \prec f$, if
\[
\int^t_0 g(x) \, dx \leq \int^t_0 f(x) \, dx \text{ for all }t \in [0,1] \qqand \int^1_0 g(x) \, dx = \int^1_0 f(x) \, dx.
\]
\end{defn}
Note if $g \prec f$ and $h \prec g$, one clearly has $h \prec f$.

To apply the above definition, we desire an analogue of eigenvalues for self-adjoint operators in tracial von Neumann algebras.
For this section and the rest of the paper, given an normal operator $N$ in a von Neumann algebra, we will use $1_{X}(N)$
to denote the spectral projection of $N$ corresponding to a Borel set $X \subseteq \bC$.  
\begin{defn}
\label{defn:spectral-scale}
Let $(\fM, \tau)$ be a diffuse, tracial von Neumann algebra and let $T \in \fM$ be self-adjoint.  The \emph{eigenvalue function of $T$} is defined for $s \in [0,1)$ by
\[
\lambda_T(s) = \inf \{t \in \bR \mid \tau(1_{(t,\infty)}(T)) \leq s \}.
\]
\end{defn}

It is elementary to verify that the eigenvalue function of $T$ is a bounded, non-increasing, right continuous function from $[0,1)$ to $\bR$.
The following result is seemingly folklore, and a proof may be found in \cite{AM2007}*{Proposition 2.3}.

\begin{prop} 
\label{prop:spectral-scale}
Let $(\fM, \tau)$ be a diffuse, tracial von Neumann algebra and let $T \in \fM$ be self-adjoint.
Then there is a projection-valued measure $e_T$ on $[0,1)$ valued in $\fM$ such that $\tau(e_T([0,t))) = t$ for every $t \in [0,1)$ and
\[
T = \int_0^1 \lambda_T(s)\, de_T(s).
\]
In particular $\tau(T) = \int^1_0 \lambda_T(s) \, ds$.
\end{prop}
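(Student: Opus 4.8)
The plan is to build the projection-valued measure $e_T$ directly from the spectral measure of $T$ together with the trace, treating the trace as a kind of "arc-length" parametrization of the spectrum of $T$ read from the top down. First I would recall that since $(\fM,\tau)$ is diffuse, for every $t\in[0,1)$ and every real $u$ there is a projection $p\le 1_{[u,\infty)}(T)$ with $\tau(p)$ equal to any prescribed value in $[0,\tau(1_{[u,\infty)}(T))]$; this is the standard fact that a diffuse tracial von Neumann algebra contains projections of every trace below a given one, applied inside the corner $1_{[u,\infty)}(T)\fM 1_{[u,\infty)}(T)$. The idea is then to define, for $t\in[0,1)$,
\[
e_T\big([0,t)\big)\ :=\ 1_{(\lambda_T(t),\infty)}(T)\ +\ q_t,
\]
where $q_t$ is a chosen projection under $1_{\{\lambda_T(t)\}}(T)$ whose trace is exactly $t-\tau(1_{(\lambda_T(t),\infty)}(T))$ — this last quantity being nonnegative and at most $\tau(1_{\{\lambda_T(t)\}}(T))$ precisely because of the definition of the eigenvalue function $\lambda_T$ as an infimum. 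One must make these choices coherently in $t$ (so that $e_T([0,s))\le e_T([0,t))$ for $s\le t$); the clean way is to fix once and for all, for each atom $\lambda$ of the distribution of $T$, an increasing continuous path of subprojections of $1_{\{\lambda\}}(T)$ — available again by diffuseness, e.g.\ by choosing a diffuse abelian subalgebra of that corner and using its own spectral projections.

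Next I would verify the two required properties. The normalization $\tau(e_T([0,t)))=t$ is immediate from the construction: $\tau(e_T([0,t))) = \tau(1_{(\lambda_T(t),\infty)}(T)) + \tau(q_t) = t$ by the choice of $\tau(q_t)$. For the integral representation $T=\int_0^1 \lambda_T(s)\,de_T(s)$, I would check that the pushforward of the measure $e_T$ under $\lambda_T$ is exactly the spectral measure of $T$: for a Borel set $B\subseteq\bR$, one computes $e_T(\lambda_T^{-1}(B))$ and shows it equals $1_B(T)$. On the level of values $\lambda_T(s)=u$ with $u$ not an atom this is forced by monotonicity of $\lambda_T$ and the fact that $\lambda_T$ is essentially the (right-continuous) inverse of $t\mapsto \tau(1_{[u,\infty)}(T))$; on an atom $\lambda$, the preimage $\lambda_T^{-1}(\{\lambda\})$ is an interval of length $\tau(1_{\{\lambda\}}(T))$ and $e_T$ of that interval telescopes, via the construction above, to exactly $1_{\{\lambda\}}(T)$. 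Once $(\lambda_T)_*e_T$ equals the spectral measure of $T$, the change-of-variables formula for spectral integrals gives $\int_0^1 \lambda_T(s)\,de_T(s)=\int_\bR u\, d\,1_u(T)=T$, and applying $\tau$ yields $\tau(T)=\int_0^1\lambda_T(s)\,ds$ since $\tau\circ e_T$ is Lebesgue measure on $[0,1)$.

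The main obstacle I anticipate is the bookkeeping at the atoms of $T$: off the atoms, $\lambda_T$ is a genuine decreasing rearrangement and $e_T$ is forced, but on an atom $\lambda$ one is slicing the spectral projection $1_{\{\lambda\}}(T)$ into a continuum of pieces indexed by the corresponding subinterval of $[0,1)$, and one has to do this measurably and monotonically so that $e_T$ is an honest projection-valued measure rather than merely a family of projections with the right traces. Diffuseness of $\fM$ (hence of each corner $1_{\{\lambda\}}(T)\fM1_{\{\lambda\}}(T)$) is exactly what makes this possible, and — as the excerpt notes — the full argument is in \cite{AM2007}*{Proposition 2.3}, so I would present the construction and the two verifications and refer there for the measure-theoretic details.
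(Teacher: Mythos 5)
Your construction is correct and is essentially the standard argument that the paper itself delegates to \cite{AM2007}*{Proposition 2.3} rather than proving: the paper gives no proof of its own, and your outline (using diffuseness of the corners $1_{\{\lambda\}}(T)\fM 1_{\{\lambda\}}(T)$ to slice the atomic spectral projections coherently, checking $\tau(e_T([0,t)))=t$, and identifying the pushforward $(\lambda_T)_*e_T$ with the spectral measure of $T$) is exactly the content of that reference. The only step you leave implicit --- passing from the increasing family $t\mapsto e_T([0,t))$ to an honest countably additive projection-valued measure --- is routine here because $\tau(e_T([0,t)))=t$ together with normality and faithfulness of $\tau$ gives strong continuity of the family, so no gap remains.
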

\begin{rem}
\label{rem:L-infty}
Note the von Neumann algebra generated by $\{e_T([0,t))\}_{t \in [0,1)}$ is isomorphic to a copy of $L^\infty[0,1]$ inside $\fM$ in such a way that $T$ corresponds to the $L^\infty$-function $s \mapsto \lambda_T(s)$ and $\tau$ restricts to integration against the Lebesgue measure $m$.  
\end{rem}

 Using the above definitions, we may  now define the main objects of study in this paper.

\begin{defn}
\label{defn:numran}
Let $(\fM, \tau)$ be a tracial von Neumann algebra and let $C \in \fM_\sa$.  The \emph{$C$-numerical range} of an element $T \in \fM$ is the set
\[
V_C(T) := \{ \tau (TX) \, \mid \, X \in \fM_{\sa}, \lambda_X \prec \lambda_C\}.
\]
\end{defn}

\begin{rem}\label{rem:Valpha}
It is not difficult to verify that if $(\fM, \tau)$ is a tracial von Neumann algebra, if $T, S \in \fM_{\sa}$ with $T$ positive, and if $\lambda_S \prec \lambda_T$, then $S$ must be positive.  In addition, it is not difficult to show that if $P \in \fM$ is a projection with $\tau(P) = \alpha \in [0,1]$, then
\[
\{X \in \fM_{\sa} \, \mid \, \lambda_X \prec \lambda_P\} = \{X \in \fM \, \mid \, 0 \leq X \leq I_\fM, \tau(X) = \alpha\}.
\]
In analogy, for $\alpha \in (0,1]$ and $T \in \fM$, we define the \emph{$\alpha$-numerical range of} $T$ to be the set
\[
V_\alpha(T) := \frac{1}{\alpha} \{ \tau (TX) \, \mid \, X \in \fM, 0 \leq X \leq I_\fM, \tau(X) = \alpha\}.
\]
The $\alpha$-numerical ranges were originally studied (through a multivariate analogue for commuting $n$-tuples of self-adjoint operators) in the papers \cites{AA2002, AA2003-1, AA2003-2, AAW1999} and the $\frac{1}{\alpha}$  factor
is included so that if $0 < \alpha < \beta \leq 1$ then $V_\beta(T) \subseteq V_\alpha(T)$.  
\end{rem}

The following contains a collection of important properties of $C$-numerical ranges that mainly follow from properties of eigenvalue functions contained in \cites{F1982, FK1986, P1985}.  Note for two subsets $X, Y$ of $\bC$ and $\omega \in \bC$, we define
\begin{align*}
\omega X &= \{ \omega z \, \mid \, z \in X\}, \\
\omega + X &= \{ \omega + z \, \mid \, z \in X\}, \text{ and}\\
X + Y &= \{z + w \, \mid \, z \in X, w \in Y\}.
\end{align*}
\begin{prop}
\label{prop:basic-properties}
Let $(\fM, \tau)$ be a tracial von Neumann algebra, let $T, S \in \fM$, and let $C, C_1, C_2 \in \fM_{\sa}$.  Then
\begin{enumerate}[(i)]
\item $V_C(T)$ is a convex set for all $T \in \fM$,  \label{part:convex}
\item $V_C(T^*)$ equals the complex conjugate of $V_C(T)$, 
\item $V_C(\Re(T)) = \{\Re(z)\, \mid \, z \in V_C(T)\}$ and $V_C(\Im(T)) = \{\Im(z)\, \mid \, z \in V_C(T)\}$,\label{part:real}
\item $V_C(T + S) \subseteq V_C(T) + V_C(S)$, \label{part:sum}
\item $V_C( z I_\fM + w T) = z\tau(C) + w V_C(T)$ for all $z,w \in \bC$, \label{part:rotate-translate}
\item $V_C(U^*TU) = V_C(T)$ for all unitaries $U \in \fM$, \label{part:unitary}
\item $V_{C_1}(T) \subseteq V_{C_2}(T)$ whenever $C_1 \prec C_2$, and
\item $V_{aC+bI_\fM}(T) = aV_C(T) + b\tau(T)$ for all $a,b \in \bR$. \label{part:linear-in-C}
\end{enumerate}
\end{prop}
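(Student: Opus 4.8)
The plan is to verify each of the eight items in turn, most of which reduce to elementary manipulations with the trace together with the observation that the constraint set $\{X \in \fM_\sa \mid \lambda_X \prec \lambda_C\}$ behaves well under the relevant operations. For item (ii) I would use $\tau(T^*X) = \overline{\tau(TX)}$ for self-adjoint $X$ (since $\overline{\tau(Y)} = \tau(Y^*)$), so that as $X$ ranges over the constraint set, $\tau(T^*X)$ ranges over the complex conjugates of the $\tau(TX)$. For item (iii) I would write $\tau(\Re(T)X) = \Re(\tau(TX))$ when $X = X^*$ (and similarly for $\Im$), which gives the claimed description. For item (iv) I would note that $\tau((T+S)X) = \tau(TX) + \tau(SX)$ for a single admissible $X$, so every element of $V_C(T+S)$ is a sum of an element of $V_C(T)$ and an element of $V_C(S)$; the inclusion is typically strict because the two summands must use the \emph{same} $X$. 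Item (v) is immediate from linearity of the trace together with the fact that $\tau(X) = \int_0^1 \lambda_X(s)\,ds = \int_0^1 \lambda_C(s)\,ds = \tau(C)$ whenever $\lambda_X \prec \lambda_C$ (by the equality-of-integrals clause in the definition of majorization, together with Proposition~\ref{prop:spectral-scale}).

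For item (vi), the key point is that $\lambda_{U^*XU} = \lambda_X$ for any unitary $U$, since $U^*XU$ and $X$ have the same spectral distribution relative to $\tau$ (conjugation by a unitary preserves $\tau$ and intertwines spectral projections: $1_{(t,\infty)}(U^*XU) = U^*1_{(t,\infty)}(X)U$). Hence the substitution $X \mapsto UXU^*$ is a bijection of the constraint set, and $\tau(U^*TU\cdot X) = \tau(T\cdot UXU^*)$, giving equality of the two ranges. For item (vii) I would use the transitivity of majorization noted right after the definition: if $\lambda_X \prec \lambda_{C_1}$ and $C_1 \prec C_2$ (meaning $\lambda_{C_1} \prec \lambda_{C_2}$), then $\lambda_X \prec \lambda_{C_2}$, so the constraint set for $C_1$ is contained in that for $C_2$, whence $V_{C_1}(T) \subseteq V_{C_2}(T)$. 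Item (viii) requires identifying, for $a,b \in \bR$, which $X$ are admissible for $aC + bI_\fM$: one checks from the definition of the eigenvalue function that $\lambda_{aC+bI_\fM}(s) = a\lambda_C(s) + b$ when $a \ge 0$ and $\lambda_{aC+bI_\fM}(s) = a\lambda_C(1-s^-) + b$ (an order-reversal) when $a<0$, and in either case $Y \mapsto aY + bI_\fM$ (with the order-reversal absorbed by an automorphism of $L^\infty[0,1]$, as in Remark~\ref{rem:L-infty}) is a bijection from $\{Y : \lambda_Y \prec \lambda_C\}$ onto $\{X : \lambda_X \prec \lambda_{aC+bI_\fM}\}$. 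Then $\tau((aY+bI_\fM)T) = a\tau(YT) + b\tau(T)$ yields the claim.

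The genuine content is item (i), convexity, which does not follow from a substitution trick and is the main obstacle. I would prove it by reducing to the matrix case via an approximation/ultraproduct argument, or more self-containedly as follows. Fix $T$ and pick $X_0, X_1 \in \fM_\sa$ with $\lambda_{X_j} \prec \lambda_C$; I want $t\tau(TX_1) + (1-t)\tau(TX_0) \in V_C(T)$ for $t \in [0,1]$. Passing to $\Re T$ and $\Im T$ separately via item (iii) would \emph{not} suffice since convexity is a joint statement, so instead I would work directly. The strategy is to realize both $X_0$ and $X_1$ inside a common diffuse abelian subalgebra after conjugating by suitable unitaries — but that is exactly where the difficulty lies, since $X_0$ and $X_1$ need not commute. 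The cleaner route, which I expect the authors take, is: first establish that $V_C(T)$ depends only on the pair $(\text{distribution of }C,\ T)$ and is independent of $\fM$ (a later-stated goal of the section), then embed everything in a II$_1$ factor where $\{X : \lambda_X \prec \lambda_C\}$ coincides with the $\|\cdot\|_2$-closed convex hull of the unitary orbit of $C$ (the Schur–Horn type theorem alluded to in the introduction); convexity of that closed convex hull, pushed through the $\bR$-linear map $Y \mapsto \tau(TY)$, gives convexity of $V_C(T)$ immediately. If the needed closed-convex-hull description is not yet available at this point in the paper, the fallback is a direct Hausdorff–Toeplitz style argument: reduce via spectral approximation of $C$ by elements with finite spectrum to the case where $C$ is a ``matrix-like'' projection combination, invoke Westwick's theorem \cite{W1975} in $\M_n(\bC)$, and take limits using the continuity of $V_C(T)$ in $C$ (also a stated goal of the section). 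I expect the write-up to cite the forthcoming structural results and keep the proof of (i) short, deferring the real work to those results.
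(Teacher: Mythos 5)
Your handling of parts (ii)--(viii) matches the paper's, which dismisses most of them as trivial computations and records exactly the two facts you isolate for (vi) and (viii): $\lambda_{U^*CU}=\lambda_C$, and the identities $\lambda_{C+bI_\fM}=\lambda_C+b$, $\lambda_{aC}=a\lambda_C$ for $a\ge 0$, $\lambda_{aC}(s)=a\lambda_C(1-s)$ (off a countable set) for $a<0$, from which $\lambda_X\prec\lambda_C$ iff $\lambda_{aX}\prec\lambda_{aC}$. The real divergence is part (i). The paper does \emph{not} pass to a II$_1$ factor or invoke the closed-convex-hull-of-unitary-orbit description; it proves directly that the constraint set $\{X\in\fM_{\sa}\mid \lambda_X\prec\lambda_C\}$ is convex by citing the Fack--Kosaki submajorization inequality
\[
\lambda_{tX_1+(1-t)X_2}\;\prec\; t\lambda_{X_1}+(1-t)\lambda_{X_2}\;\prec\;\lambda_C
\]
(after a translation to reduce to positive operators), and then observes that $V_C(T)$ is the image of a convex set under the linear map $X\mapsto\tau(TX)$. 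Your primary route --- first prove independence of the ambient algebra via the conditional-expectation theorem, embed in a II$_1$ factor, and identify the constraint set with $\overline{\conv(\U(C))}^{w^*}$ --- is also valid and non-circular (Proposition \ref{prop:algebra-doesnt-affect-nr} and Theorem \ref{thm:majorization} do not depend on convexity), and indeed the paper itself leans on exactly that characterization a page later in the proof of Theorem \ref{thm:C-numerical-ranges-alternate-defn}. What the paper's argument buys is that it works in an arbitrary tracial von Neumann algebra at the point where the proposition is stated, with no detour through factors; what yours buys is that convexity becomes an immediate formal consequence of a convex hull being convex. Your fallback (matrix approximation plus Westwick plus continuity in $C$) is shakier, since the continuity results as proved in the paper themselves route through Theorem \ref{thm:C-numerical-ranges-alternate-defn}, but you only offer it as a contingency, and your main route stands.
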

\begin{proof}
For part (\ref{part:convex}), notice that if $X_1, X_2 \in \fM_{\sa}$ are such that $\lambda_{X_1}, \lambda_{X_2} \prec \lambda_C$, then 
\[
\lambda_{t X_1 + (1-t) X_2} \prec t \lambda_{X_1} +(1-t) \lambda_{X_2} \prec \lambda_C
\]
for all $t \in [0,1]$ by \cite{FK1986}*{Lemma 2.5 (ii)}, by \cite{FK1986}*{Theorem 4.4}, and by a simple translation argument to assume all three operators are positive.  Hence it trivially follows that
\[
\{X \in \fM_{\sa} \, \mid \,  \lambda_X \prec \lambda_C\}
\]
is a convex set so $V_C(T)$ is convex (being the image under a linear map of a convex set).

Except for parts (\ref{part:unitary}) and (\ref{part:linear-in-C}), the other parts are trivial computations.  To see part (\ref{part:unitary}), note $\lambda_{U^*CU} = \lambda_C$ for all unitaries $U \in \fM$ and all $C \in \fM_{\sa}$.  To see part (\ref{part:linear-in-C}), note it is trivial to verify that $\lambda_{C + bI_\fM}(s) = \lambda_C(s) + b$ for all $s\in [0,1)$.  If $a \in \bR$ is positive, then $\lambda_{aC}(s) = a \lambda_C(s)$ for all $s \in [0,1)$.  Consequently, if $a$ is positive, then $\lambda_X \prec \lambda_C$ if and only if $\lambda_{aX} \prec \lambda_{aC}$ so the result follows.  If $a \in \bR$ is negative, then one can verify that $\lambda_{aC}(s) = a \lambda_C(1-s)$ for all but a countable number of $s \in [0,1)$ where the jump discontinuities of $\lambda_C(s)$ occur.  One can again verify in this case that $\lambda_X \prec \lambda_C$ if and only if $\lambda_{aX} \prec \lambda_{aC}$ so the result follows.
\end{proof}

Our next goal is to show the very useful property that the $C$-numerical ranges of an operator do not depend on the ambient von Neumann algebra.  To do so, we recall the following result.

\begin{thm}[see \cite{AK2006}]
\label{thm:AK-expect}
Let $(\fM, \tau)$ be a tracial von Neumann algebra, let $\fN$ be a von Neumann subalgebra of $\fM$, and let $E_\fN : \fM \to \fN$ be the trace-preserving conditional expectation of $\fM$ onto $\fN$.  Then $\lambda_{E_\fN(X)} \prec \lambda_{X}$ for all $X \in \fM_{\sa}$.
\end{thm}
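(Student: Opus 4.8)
The plan is to verify the two defining conditions of the majorization $\lambda_{E_\fN(X)} \prec \lambda_X$ separately. The equality of total integrals is immediate: by Proposition~\ref{prop:spectral-scale} one has $\int_0^1 \lambda_{E_\fN(X)}(s)\,ds = \tau(E_\fN(X))$ and $\int_0^1 \lambda_X(s)\,ds = \tau(X)$, and these coincide because $E_\fN$ is trace-preserving. So all the content lies in the partial-integral inequalities
\[
\int_0^t \lambda_{E_\fN(X)}(s)\,ds \;\le\; \int_0^t \lambda_X(s)\,ds \qfor t \in [0,1].
\]

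The key tool I would use is the ``Ky Fan maximum principle'' for eigenvalue functions in a diffuse tracial von Neumann algebra: for $T \in \fM_{\sa}$ and $t \in [0,1]$,
\[
\int_0^t \lambda_T(s)\,ds \;=\; \max\bigl\{\, \tau(TA) \;\mid\; A \in \fM,\ 0 \le A \le I_\fM,\ \tau(A) = t \,\bigr\},
\]
the maximum being attained. This may be quoted from the literature on generalized $s$-numbers and eigenvalue functions (the circle of ideas in \cites{F1982, FK1986, P1985}), or proved directly: putting $c := \lambda_T(t)$, the spectral projections satisfy $\tau(1_{(c,\infty)}(T)) \le t \le \tau(1_{[c,\infty)}(T))$, so diffuseness of $\fM$ lets one choose a projection $P$ with $1_{(c,\infty)}(T) \le P \le 1_{[c,\infty)}(T)$ and $\tau(P) = t$; one checks $P$ commutes with $T$ and $\tau(TP) = \int_0^t \lambda_T(s)\,ds$, while for any admissible $A$ the identity $\tau(TA) - \tau(TP) = \tau\bigl((T - cI_\fM)(A - P)\bigr)$, together with $(T - cI_\fM)P \ge 0$, $(T - cI_\fM)(I_\fM - P) \le 0$, and $0 \le A \le I_\fM$, forces $\tau(TA) \le \tau(TP)$.

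Granting this, the theorem drops out. Fix $t \in [0,1]$ and, using attainment for the self-adjoint element $E_\fN(X)$, choose $A_0 \in \fM$ with $0 \le A_0 \le I_\fM$, $\tau(A_0) = t$, and $\tau(E_\fN(X) A_0) = \int_0^t \lambda_{E_\fN(X)}(s)\,ds$. Set $B_0 := E_\fN(A_0) \in \fN$; since $E_\fN$ is positive, unital, and trace-preserving one has $0 \le B_0 \le I_\fM$ and $\tau(B_0) = t$. Because $E_\fN(X) \in \fN$, the module property of $E_\fN$ and trace-invariance give $\tau(E_\fN(X)A_0) = \tau\bigl(E_\fN(E_\fN(X)A_0)\bigr) = \tau(E_\fN(X)B_0) = \tau(XB_0)$, whence
\[
\int_0^t \lambda_{E_\fN(X)}(s)\,ds \;=\; \tau(XB_0) \;\le\; \max\bigl\{\, \tau(XB) \;\mid\; 0 \le B \le I_\fM,\ \tau(B) = t \,\bigr\} \;=\; \int_0^t \lambda_X(s)\,ds,
\]
exactly as required. (If $\fM$ is not diffuse one first replaces $\fM$ and $\fN$ by $\fM \,\overline{\otimes}\, L^\infty[0,1]$ and $\fN \,\overline{\otimes}\, L^\infty[0,1]$ with the obvious traces and conditional expectation, changing neither the eigenvalue functions nor $E_\fN$ in any relevant way.)

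I expect the only genuine work to be the Ky Fan maximum principle, and inside it the attainment statement: one must produce a maximizer which is, up to a trace-correcting subprojection of the eigenspace at the critical value $c = \lambda_T(t)$, an honest spectral projection of $T$, and it is precisely diffuseness that guarantees a subprojection of the prescribed trace exists. Everything else --- the equality of total integrals, the module identity, and the order and trace properties of $E_\fN$ --- is routine. An alternative route avoiding Ky Fan is to use the classical fact that $\lambda_{E_\fN(X)} \prec \lambda_X$ is equivalent to $\tau(E_\fN(X)) = \tau(X)$ together with $\tau\bigl((E_\fN(X) - cI_\fM)_+\bigr) \le \tau\bigl((X - cI_\fM)_+\bigr)$ for all $c \in \bR$; the latter is the trace-Jensen inequality $\tau\bigl(\phi(E_\fN(Y))\bigr) \le \tau\bigl(\phi(Y)\bigr)$ for convex $\phi$, applied to $Y = X - cI_\fM$ and $\phi(r) = r_+$.
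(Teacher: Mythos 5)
Your argument is correct, but note that the paper does not prove this statement at all: it is quoted from Arveson--Kadison \cite{AK2006}, so there is no internal proof to compare against. What you have written is a legitimate self-contained proof, and it follows the standard route for this result: the Ky Fan maximum principle $\int_0^t \lambda_T(s)\,ds = \sup\{\tau(TA) \mid 0 \le A \le I_\fM,\ \tau(A)=t\}$, combined with the observation that $A \mapsto E_\fN(A)$ maps the constraint set into itself while $\tau(E_\fN(X)A) = \tau(X E_\fN(A))$. All the individual steps check out: the inequalities $\tau(1_{(c,\infty)}(T)) \le t \le \tau(1_{[c,\infty)}(T))$ for $c = \lambda_T(t)$, the construction of the maximizing projection $P$ (a subprojection of the eigenspace $1_{\{c\}}(T)$ commutes with $T$ since $T$ acts as the scalar $c$ there), the positivity bookkeeping showing $\tau((T-cI_\fM)(A-P)) \le 0$, the module identity, and the tensoring with $L^\infty[0,1]$ to dispose of non-diffuse $\fM$. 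Two small remarks. First, you do not actually need the attainment half of the Ky Fan principle: since $\int_0^t \lambda_{E_\fN(X)}(s)\,ds = \sup_A \tau(E_\fN(X)A)$ and each individual $\tau(E_\fN(X)A) = \tau(X E_\fN(A)) \le \sup_B \tau(XB) = \int_0^t \lambda_X(s)\,ds$, taking the supremum over $A$ already finishes the proof; this removes the only place where diffuseness (the trace-correcting subprojection of $1_{\{c\}}(T)$) genuinely intervenes. Second, your alternative route via $\tau\bigl((E_\fN(X)-cI_\fM)_+\bigr) \le \tau\bigl((X-cI_\fM)_+\bigr)$ and the trace--Jensen inequality is also valid and is closer in spirit to the doubly-stochastic-map viewpoint of \cites{H1987, HN91}; either way the content is the same standard fact that unital, positive, trace-preserving maps decrease eigenvalue functions in the sense of majorization.
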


\begin{prop}
\label{prop:algebra-doesnt-affect-nr}
Let $(\fM,\tau)$ be a tracial von Neumann algebra and let $C \in \fM_{\sa}$.
For $T\in\fM$ let $V_C(T)$ denote the $C$-numerical range as given in Definition \ref{defn:numran}.
Let $\fN$ be a von Neumann subalgebra of $\fM$ such that $T \in \fN$. Then
\begin{align}
V_C(T) = \{\tau(TX) \, \mid \, X \in \fN_{\sa}, \lambda_X \prec \lambda_C\}. \label{eq:inN}
\end{align}
In particular, $V_C(T)$ does not depend on the diffuse tracial von Neumann algebra considered.
\end{prop}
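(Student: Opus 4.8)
The plan is to prove the two inclusions between $V_C(T)$ and the right-hand side of (\ref{eq:inN}), and then to deduce the "in particular" clause about independence of the ambient diffuse algebra.

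First, the inclusion $\supseteq$ is immediate: if $X \in \fN_{\sa}$ with $\lambda_X \prec \lambda_C$, then $X \in \fM_{\sa}$ as well, so $\tau(TX) \in V_C(T)$ by Definition \ref{defn:numran}. (Here I note that $\lambda_X$ computed in $\fN$ agrees with $\lambda_X$ computed in $\fM$, since the eigenvalue function depends only on the distribution of $X$ with respect to $\tau$, which $\fN$ inherits from $\fM$; strictly this requires $\fN$ to be diffuse, or at least the ambient algebra to be diffuse so that Definition \ref{defn:spectral-scale} applies — I would remark that one may always arrange this, e.g. by passing to $\fN \bar\otimes L^\infty[0,1]$, which does not change traces of products.)

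For the reverse inclusion $\subseteq$, let $X \in \fM_{\sa}$ with $\lambda_X \prec \lambda_C$ and consider $\tau(TX)$. Since $T \in \fN$, I would like to replace $X$ by something in $\fN$ without changing $\tau(TX)$ and without destroying the majorization $\prec \lambda_C$. The natural candidate is $E_\fN(X)$, where $E_\fN : \fM \to \fN$ is the trace-preserving conditional expectation. The trace property gives $\tau(TX) = \tau(E_\fN(TX)) = \tau(T\, E_\fN(X))$, using $T \in \fN$ and the $\fN$-bimodularity of $E_\fN$. And $E_\fN(X) \in \fN_{\sa}$ since $X$ is self-adjoint and $E_\fN$ is positive and $*$-preserving. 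Finally, Theorem \ref{thm:AK-expect} yields $\lambda_{E_\fN(X)} \prec \lambda_X$, and since majorization is transitive (noted right after the definition of $\prec$), $\lambda_{E_\fN(X)} \prec \lambda_C$. Hence $\tau(TX) = \tau(T\, E_\fN(X))$ lies in the right-hand side of (\ref{eq:inN}). This establishes the equality.

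For the last sentence, suppose $T$ lies in two diffuse tracial von Neumann algebras; embedding both (with trace-preserving inclusions) into a common one — or simply taking $\fN$ to be the von Neumann algebra generated by $T$ together with the relevant copy of $C$, sitting inside each — and applying the displayed equality shows the $C$-numerical range is computed the same way in either. The only point requiring care is that $C$ (or at least its eigenvalue function $\lambda_C$) must make sense and agree across the algebras; since $\lambda_C$ is determined by the spectral distribution of $C$, this is automatic once we fix that distribution. The main obstacle is thus not any single deep step — Theorem \ref{thm:AK-expect} does the real work — but rather the bookkeeping needed to make "does not depend on the algebra" precise, namely checking that $\lambda_X$ is genuinely an invariant of the pair (distribution of $X$, the trace) and is unaffected by enlarging the algebra, which I would handle by the tensoring-with-$L^\infty[0,1]$ trick to reduce to the diffuse case whenever necessary.
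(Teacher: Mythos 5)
Your proof is correct and follows essentially the same route as the paper: both inclusions are handled exactly as in the paper's argument, with the trace-preserving conditional expectation $E_\fN$ and Theorem \ref{thm:AK-expect} doing the work for the inclusion $\subseteq$. Your additional care about $\lambda_X$ being an invariant of the distribution of $X$ is a reasonable (and correct) elaboration of a point the paper leaves implicit.
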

\begin{proof}
The inclusion $\supseteq$ in (\ref{eq:inN}) is clear.
For the reverse inclusion,
let $E_\fN : \fM \to \fN$ denote the trace-preserving conditional expectation of $\fM$ onto $\fN$.
If $X \in \fM_{\sa}$ is such that $\lambda_X \prec \lambda_C$, then $E_{\fN}(X) \in \fN$, $\lambda_{E_\fN(X)} \prec \lambda_{X} \prec \lambda_C$ by Theorem \ref{thm:AK-expect}, and
\[
\tau(TE_\fN(X)) = \tau(E_\fN(TX)) = \tau(TX).
\]
This proves (\ref{eq:inN}).
\end{proof}

By Proposition \ref{prop:algebra-doesnt-affect-nr}, we may compute the $C$-numerical ranges in any tracial von Neumann algebra we like.  In particular, as every tracial von Neumann algebra embeds in a trace-preserving way into a type II$_1$ factor, we may restrict our attention to type II$_1$ factors when considering $C$-numerical ranges.  By doing so, we will obtain an alternate description of $C$-numerical ranges that is a direct analogue of equation (\ref{eq:matrix-C-nr}) and produces many corollaries.  We begin with the following.

\begin{defn}
Let $\fA$ be an arbitrary C$^*$-algebra and let $\U(\fA)$ denote the unitary group of $\fA$.
For $T \in \fA$, the \emph{unitary orbit of $T$} is the set
\[
\U(T) = \{U^*TU \, \mid \, U \in \U(\fA)\}
\]
and the (norm-)closed unitary orbit of $T$ is the set $\O(T) = \overline{\U(T)}^{\left\|\,\,\cdot\,\,\right\|}$.
\end{defn}

\begin{rem}
\label{rem:self-adjoints-aue}
Notice if $T, S \in \fM$ are self-adjoint operators then $\lambda_T \prec \lambda_S$ and $\lambda_S \prec \lambda_T$ if and only of $\lambda_T(s) = \lambda_S(s)$ for all $s \in [0,1)$.  By Definition \ref{defn:spectral-scale}, these are equivalent to $T$ and $S$ having the same spectral distribution.  It is well-known that these are all equivalent to $T \in \O(S)$, provided $\fM$ is a type II$_1$ factor.
\end{rem}

Notice that if $\fA$ is a finite dimensional  C$^*$-algebra, then $\U(T) = \O(T)$.  In general, $\O(T)$ is the correct object to consider when studying infinite dimensional C$^*$-algebras.  In particular, we will use $\O(T)$ to generalize equation (\ref{eq:matrix-C-nr}) to type II$_1$ factors.  In particular, the work of \cites{GS1977, P1980} proves the following result when $\fM$ is a matrix algebra.
\begin{thm}
\label{thm:C-numerical-ranges-alternate-defn}
Let $(\fM, \tau)$ be a type II$_1$ factor and let $C \in \fM_{\sa}$.   Then for all $T \in \fM$,
\[
V_C(T) = \{ \tau(TX) \, \mid \, X \in \fM_{\sa}, X \in \O(C)\}.
\]
\end{thm}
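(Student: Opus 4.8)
The inclusion $\supseteq$ is immediate: if $X \in \fM_\sa$ with $X \in \O(C)$, then by Remark~\ref{rem:self-adjoints-aue} we have $\lambda_X = \lambda_C$, hence $\lambda_X \prec \lambda_C$, so $\tau(TX) \in V_C(T)$. The content is the reverse inclusion $\subseteq$: given $X \in \fM_\sa$ with $\lambda_X \prec \lambda_C$, I must produce $Y \in \O(C)$ with $\tau(TY) = \tau(TX)$. The natural strategy is to approximate: show that any such $X$ lies in the \emph{norm closure} of the set of operators obtainable from $C$ by unitary conjugation plus averaging (i.e.\ $\tau(TX)$ is a limit of values $\tau(TY_n)$ with $Y_n \in \U(C)$), and then use that the value $\tau(T\,\cdot\,)$ depends norm-continuously on its argument together with compactness to land exactly on $\O(C)$.

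More concretely, here is the order of steps. First, reduce to a discretization of $\lambda_C$: replace $C$ by a self-adjoint operator $C'$ with finite spectrum whose eigenvalue function is a step function approximating $\lambda_C$, chosen so that $\|C - C'\|$ is small; likewise replace $X$ by a step-function approximant $X'$ with $\lambda_{X'} \prec \lambda_{C'}$ (this requires a small perturbation lemma for majorization, which should follow from the continuity properties in \cites{FK1986,F1982} or can be done by hand on the step functions). Second, in the finite-spectrum case, invoke the Schur--Horn type description: if $\lambda_{X'} \prec \lambda_{C'}$ and $\fM$ is a II$_1$ factor, then $X'$ can be written as an $L^1$- (indeed strong-operator-) limit, or even a suitable average, of unitary conjugates of $C'$ — this is exactly the majorization $\Leftrightarrow$ "$X'$ is in the closed convex hull of $\U(C')$" phenomenon. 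In fact, the cleanest route is: $\lambda_{X'}\prec\lambda_{C'}$ iff there is a trace-preserving conditional expectation picture — choose a diffuse abelian subalgebra $\A \cong L^\infty[0,1]$ containing $X'$, realize $C'$ (up to $\O(\cdot)$) as an element $C''$ with $E_\A(C'') = X'$ using Theorem~\ref{thm:AK-expect} in reverse (an approximate Schur--Horn statement), and then write $E_\A(C'') = \int U_g^* C'' U_g\, dg$ as an average of unitary conjugates over a compact group, so that $\tau(TE_\A(C'')) = \int \tau(T U_g^* C'' U_g)\,dg$ is an average of elements of $V_C(T)$-via-$\U(C)$; convexity of this set (Proposition~\ref{prop:basic-properties}(i)) and compactness then force the average to be attained.

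The honest way to organize this, avoiding a full re-proof of Schur--Horn for II$_1$ factors, is: (a) prove the result first for $C$ with finite spectrum, where the classical arguments of \cites{GS1977,P1980} combined with the fact that in a II$_1$ factor every projection of trace $\alpha$ is a norm limit — actually unitarily equivalent — to any other, give that $\{X \in \fM_\sa : \lambda_X \prec \lambda_C\}$ is precisely $\overline{\conv}\,\U(C)$ (closure in, say, $\|\cdot\|_2$), hence $\{\tau(TX)\} = \tau(T\,\overline{\conv}\,\U(C)) = \overline{\conv}\,\{\tau(TY): Y\in\U(C)\} = \overline{\conv}\,\{\tau(TY): Y\in\O(C)\}$, and this last set is already convex and compact and equals $\{\tau(TY): Y\in\O(C)\}$ because $\O(C)$ is norm-compact (it is the continuous image of the unitary group? — no, but it is closed and bounded; rather, the point is that the right-hand set in the theorem is itself convex, which one shows directly as in Westwick/Poon, so it needs no convex hull); (b) pass from finite-spectrum $C$ to general $C$ by the approximation in the first step, using that both sides are norm-continuous in $C$ (the left side by the continuity-in-$C$ result announced in the introduction, the right side because $\O(C_n) \to \O(C)$ in Hausdorff distance when $C_n \to C$ in norm).

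\textbf{Main obstacle.} The crux is step (a) in a II$_1$ factor: establishing that $\lambda_X \prec \lambda_C$ implies $X$ is approximated (in the relevant topology) by averages of unitary conjugates of $C$, i.e.\ the II$_1$-factor analogue of the statement "majorization $\Leftrightarrow$ membership in the closed convex hull of the unitary orbit." For matrices this is the Schur--Horn theorem plus the fact that doubly stochastic matrices are averages of permutations (Birkhoff); in the II$_1$ factor the replacement is that the eigenvalue function is preserved under $\O(\cdot)$ and that one has enough projections of every trace to run a continuous analogue of the Birkhoff argument. I would expect to invoke Theorem~\ref{thm:AK-expect} together with a lemma producing, for $X$ with $\lambda_X \prec \lambda_C$, some $C' \in \O(C)$ and a diffuse abelian $\A \ni X$ with $E_\A(C') = X$; granting such a lemma, the averaging $E_\A(C') = \int_{\U(\A)} u C' u^*\, d\mu(u)$ (integrating over the unitary group of $\A$ against Haar measure, which is legitimate since $\A$ is abelian and the integrand is uniformly bounded) gives $\tau(TX) = \int \tau(u^* T u\, C')\, d\mu(u)$, an average of elements of the compact convex set $\{\tau(TY): Y \in \O(C)\}$ — and since that set is convex, the average lies in it. Everything else is routine approximation and the already-proved properties.
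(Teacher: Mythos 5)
Your reduction of the problem is on target up to a point: using Theorem~\ref{thm:majorization} and weak$^*$-continuity of $Z\mapsto\tau(TZ)$ one does get $V_C(T)=\overline{\conv}\{\tau(TU^*CU)\,\mid\,U\in\U(\fM)\}$, and the entire content of the theorem is removing that closed convex hull, i.e.\ showing every value is \emph{attained} on $\O(C)$. But both mechanisms you offer for this step fail. The first is circular: you assert that $\{\tau(TY)\,\mid\,Y\in\O(C)\}$ is ``already convex and compact,'' to be ``shown directly as in Westwick/Poon.'' Convexity of that set is precisely Westwick's theorem, which is the statement being proved (it is equivalent to the theorem given part~(i) of Proposition~\ref{prop:basic-properties}); Westwick's matrix proof is a genuinely hard topological argument, Poon's proof itself runs through the extreme-point structure of $\conv(\U(C))$, and neither transfers to a II$_1$ factor for free. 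Compactness is also not obvious, since $\O(C)$ is norm-closed and bounded but not norm-compact in infinite dimensions. The second mechanism (your ``main obstacle'' paragraph) rests on an unproven exact Schur--Horn statement --- producing $C'\in\O(C)$ and diffuse abelian $\A\ni X$ with $E_\A(C')=X$ exactly, which is a hard theorem in its own right and not available from Theorem~\ref{thm:AK-expect} ``in reverse'' --- and on writing $E_\A$ as $\int_{\U(\A)}uC'u^*\,d\mu(u)$ against Haar measure. For diffuse abelian $\A$ the unitary group is not compact and carries no Haar measure; one only gets $E_{\A'\cap\fM}$ (not $E_\A$ unless $\A$ is a MASA) as a limit of averages, and even then the final step ``the average lies in the set because the set is convex'' again presupposes the convexity you are trying to establish.

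The ingredient you are missing is the one the paper actually uses: fix $X$ with $\lambda_X\prec\lambda_C$, form the fiber $Q_{X,C}=\{Y\in\fM_{\sa}\,\mid\,\tau(TY)=\tau(TX),\ \lambda_Y\prec\lambda_C\}$, which by Theorem~\ref{thm:majorization} is a nonempty, convex, weak$^*$-compact set, take an extreme point $A$ by Krein--Milman, and show $A\in\O(C)$. The latter uses Proposition~\ref{prop:strictly-majorized-important-technical-lemma} (from the carpenter/Schur--Horn paper of Dykema--Fang--Hadwin--Smith): if $A\notin\O(C)$ there is a nonzero projection $P$ and $\epsilon>0$ so that all small trace-zero self-adjoint perturbations $S$ supported under $P$ keep $\lambda_{A+S}\prec\lambda_C$; choosing such an $S\neq0$ in the kernel of $S\mapsto\tau(TS)$ (possible by a dimension count) gives $A\pm S\in Q_{X,C}$ and contradicts extremality. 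This perturbation lemma is exactly the II$_1$-factor substitute for the matrix fact that the extreme points of $\conv(\U(C))$ lie in $\U(C)$, and without it (or an equivalent) your argument does not close.
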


To prove Theorem \ref{thm:C-numerical-ranges-alternate-defn}, we will need two results.  The first is the following connection between majorization of eigenvalue functions and convex hulls of unitary orbits.

\begin{thm}[see \cites{AM2008, A1989, H1987, HN91, K1983, K1984, K1985}]
\label{thm:majorization}
Let $(\fM, \tau)$ be a factor and let $X, T \in \fM_{\sa}$.  Then the following are equivalent:
\begin{enumerate}
\item $\lambda_X \prec \lambda_T$.
\item $X \in \overline{\mathrm{conv}(\U(T))}^{\left\|\,\cdot\,\right\|}$.  
\item $X \in \overline{\conv(\U(T))}^{w^*}$.  
\end{enumerate}
\end{thm}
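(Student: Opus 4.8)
The plan is to establish the cycle of implications $(1) \Rightarrow (2) \Rightarrow (3) \Rightarrow (1)$, disposing of the two soft implications first; all of the real work is in $(1) \Rightarrow (2)$. We may assume $\fM$ is a type II$_1$ factor, since for finite-dimensional $\fM$ the three conditions reduce to the classical Birkhoff--von Neumann and Hardy--Littlewood--P\'olya majorization theory. The implication $(2) \Rightarrow (3)$ is immediate: the weak$^*$ topology is coarser than the norm topology, so a norm-closed set lies inside its weak$^*$-closure.

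For $(3) \Rightarrow (1)$, the plan is to show that $\K := \{Y \in \fM_{\sa} \mid \lambda_Y \prec \lambda_T\}$ is a weak$^*$-closed convex subset of $\fM$. Convexity is exactly what is shown inside the proof of Proposition \ref{prop:basic-properties}(\ref{part:convex}) (with $C = T$). For weak$^*$-closedness I would invoke the Ky Fan--type formula
\[
\int_0^t \lambda_Y(s)\, ds = \sup\{\tau(YP) \mid P \in \proj(\fM),\ \tau(P) = t\} \qquad (t \in [0,1],\ Y \in \fM_{\sa})
\]
(see \cite{FK1986}): each $Y \mapsto \tau(YP)$ is weak$^*$-continuous, so $Y \mapsto \int_0^t \lambda_Y(s)\,ds$ is a supremum of weak$^*$-continuous functions and hence weak$^*$-lower semicontinuous, whence each set $\{Y \mid \int_0^t \lambda_Y \le \int_0^t \lambda_T\}$ is weak$^*$-closed; intersecting these over $t \in [0,1)$ with the weak$^*$-closed set $\{Y \in \fM_{\sa}\mid \tau(Y) = \tau(T)\}$ shows $\K$ is weak$^*$-closed. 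Since $\lambda_{U^*TU} = \lambda_T$ for every unitary $U$ we have $\U(T) \subseteq \K$, so $\overline{\conv(\U(T))}^{w^*} \subseteq \K$, which is $(3) \Rightarrow (1)$.

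The heart of the matter is $(1) \Rightarrow (2)$, and the plan is to reduce it to a purely commutative assertion. First, $\overline{\conv(\U(T))}^{\|\cdot\|}$ is invariant under conjugation by unitaries of $\fM$ (because $W^*(U^*TU)W = (UW)^*T(UW)$ and conjugation is isometric) and is norm-closed; hence it contains the entire closed unitary orbit $\O(Y)$ of any $Y$ it contains, so by Remark \ref{rem:self-adjoints-aue} it suffices to place \emph{some} self-adjoint operator with the same spectral distribution as $X$ inside $\overline{\conv(\U(T))}^{\|\cdot\|}$. Using Proposition \ref{prop:spectral-scale} and Remark \ref{rem:L-infty}, fix a copy $\A \cong L^\infty[0,1]$ inside $\fM$ on which $\tau$ restricts to Lebesgue integration, and let $\widetilde T, \widetilde X \in \A$ correspond to the functions $s \mapsto \lambda_T(s)$ and $s \mapsto \lambda_X(s)$. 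Then $\widetilde T$ has the same distribution as $T$, so $\O(\widetilde T) = \O(T)$ and therefore $\overline{\conv(\U(\widetilde T))}^{\|\cdot\|} = \overline{\conv(\U(T))}^{\|\cdot\|}$; the task becomes to show $\widetilde X \in \overline{\conv(\U(\widetilde T))}^{\|\cdot\|}$. Now for every measure-preserving transformation $\sigma$ of $[0,1]$ the function $\lambda_T \circ \sigma \in \A$ has the same distribution as $\widetilde T$, hence lies in $\O(\widetilde T) \subseteq \overline{\conv(\U(\widetilde T))}^{\|\cdot\|}$, so
\[
\overline{\conv\{\lambda_T \circ \sigma \mid \sigma \text{ measure-preserving}\}}^{\,\|\cdot\|_\infty} \subseteq \overline{\conv(\U(\widetilde T))}^{\|\cdot\|}.
\]
Thus $(1) \Rightarrow (2)$ follows once one knows the commutative statement: if $f, g \in L^\infty[0,1]$ with $g \prec f$, then $g$ belongs to the \emph{uniform}-norm closed convex hull of the rearrangements $\{f \circ \sigma\}$.

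The main obstacle is precisely this commutative statement, and within it the fact that the closure is taken in the uniform norm. The soft inputs --- the theory of doubly stochastic operators on $L^1[0,1]$, or equivalently the approximation of the conditional expectation onto a masa by finite averages $\frac1k\sum_j \mathrm{Ad}(v_j)$ of inner automorphisms --- only place $g$ (respectively $X$) in the $L^2$-closed (equivalently weak$^*$-closed) convex hull of the relevant orbit. Bridging the gap to the operator norm is the technical content of the cited works \cites{AM2008, A1989, H1987, HN91, K1983, K1984, K1985}: one approximates $g$ uniformly by a function $h$ of finite range that still satisfies $h \prec f$ --- which requires choosing the rounding of $g$ carefully so that the partial-integral inequalities are not broken at the values of $t$ where $\int_0^t g = \int_0^t f$ --- and then realizes $h$ as an \emph{exact} finite convex combination of rearrangements of a common finite-range refinement of $f$ by the finite Birkhoff--von Neumann theorem. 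One may alternatively route the argument through a Schur--Horn-type theorem for II$_1$ factors, the converse direction to Theorem \ref{thm:AK-expect}, but the passage from the $L^2$-topology to the operator norm presents the same difficulty.
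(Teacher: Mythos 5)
The paper does not prove this theorem at all: it is imported verbatim from the cited literature (Kamei, Hiai, Hiai--Nakamura, Ando, Argerami--Massey), so there is no in-paper argument to compare yours against. Judged on its own terms, your outline is sound and is essentially the standard route those references take. The two soft implications are complete: $(2)\Rightarrow(3)$ is indeed immediate, and your $(3)\Rightarrow(1)$ argument is clean and correct --- the Ky Fan identity $\int_0^t\lambda_Y = \sup\{\tau(YP) : P\in\proj(\fM),\ \tau(P)=t\}$ (valid for self-adjoint $Y$ in a II$_1$ factor, where projections of every trace exist) exhibits $Y\mapsto\int_0^t\lambda_Y$ as a supremum of weak$^*$-continuous functionals, so the majorization body $\K$ is weak$^*$-closed, and convexity plus $\U(T)\subseteq\K$ finishes it. The reduction of $(1)\Rightarrow(2)$ to the commutative statement (via unitary invariance of the norm-closed convex hull, Remark \ref{rem:self-adjoints-aue}, and the copy of $L^\infty[0,1]$ from Remark \ref{rem:L-infty}) is also correct.

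The one place where your write-up falls short of a proof is exactly where you say it does: the uniform-norm rearrangement lemma for $g\prec f$ in $L^\infty[0,1]$ is asserted with a sketch and then attributed to the same references the paper cites. That is where all the content of the theorem lives, so as a self-contained argument this is incomplete. The sketch itself is workable but needs one more sentence than you give it: after writing the finite-range approximant $h$ as an exact convex combination $\sum_j c_j\, f_n\circ\sigma_j$ of rearrangements of a finite-range approximant $f_n$ of $f$ (common refinement into intervals of equal measure, Hardy--Littlewood--P\'olya plus Birkhoff), you must pass back from rearrangements of $f_n$ to rearrangements of $f$; this is fine because $\|f_n\circ\sigma - f\circ\sigma\|_\infty = \|f_n - f\|_\infty$ for measure-preserving $\sigma$, but it should be said, and the simultaneous choice of $h$ and $f_n$ with $h\prec f_n$ is the genuinely delicate point you correctly flag. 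Given that the paper itself treats the whole theorem as a citation, deferring this step is defensible, but be aware that it is the entire theorem.
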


The second result required to prove Theorem \ref{thm:C-numerical-ranges-alternate-defn} is the following technical result, whose proof is contained in the proof of \cite{DFHS2012}*{Theorem 5.3} and follows by simple manipulations of functions.

\begin{prop}[\cite{DFHS2012}*{Theorem 5.3}]
\label{prop:strictly-majorized-important-technical-lemma}
Let $(\fM, \tau)$ be a type II$_1$ factor and let $A, C \in \fM$ be self-adjoint operators such that $\lambda_A \prec \lambda_C$ and $A \notin \O(C)$.
Then there exists a non-zero projection $P \in \fM$ and an $\epsilon > 0$ such that $\lambda_{A + S} \prec \lambda_C$
for all self-adjoint operators $S \in \fM$
satisfying $\left\|S\right\| < \epsilon$, $S = PS = SP$, and $\tau(S) = 0$.
\end{prop}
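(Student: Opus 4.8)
The plan is to pass to eigenvalue functions, where the hypothesis $A\notin\O(C)$ says that $\lambda_A$ is \emph{strictly} majorized by $\lambda_C$; the strictness should leave room to absorb a small perturbation supported in a suitable spectral subspace of $A$. \emph{Reduction to functions.} By Remark~\ref{rem:L-infty} realize $A$ inside a diffuse abelian subalgebra $\A\cong L^\infty[0,1]$ of $\fM$ as the non-increasing function $\lambda_A$. We look for $P$ of the form $1_{[a',b')}\in\A$ with $0\le a'<b'<1$, so $\tau(P)=b'-a'$ and $A$ commutes with $P$. If $S$ is self-adjoint with $S=PS=SP$, then $A+S$ commutes with $P$ and $A+S=(1-P)A(1-P)\oplus(PAP+S)$; hence the spectral distribution of $A+S$ is that of some function $h$ on $[0,1)$ which agrees with $\lambda_A$ off $[a',b')$, satisfies $\|h-\lambda_A\|_\infty\le\|S\|$, and has $\int_0^1 h=\int_0^1\lambda_A=\tau(C)$ (the last because $\tau(S)=0$), and $\lambda_{A+S}$ is the non-increasing rearrangement $h^{*}$. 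Since the total integrals already match, it suffices to produce $a',b',\epsilon$ so that $\int_0^t h^{*}\le\int_0^t\lambda_C$ for all $t\in[0,1]$, for every $h$ on $[0,1)$ agreeing with $\lambda_A$ off $[a',b')$, with $\int_0^1 h=\int_0^1\lambda_A$ and $\|h-\lambda_A\|_\infty<\epsilon$.

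\emph{Choosing $P$ (the main point).} Put $F(t)=\int_0^t(\lambda_C-\lambda_A)$: it is continuous, $F\ge0$ (as $\lambda_A\prec\lambda_C$), $F(0)=F(1)=0$, and $F\not\equiv0$, since $A\notin\O(C)$ forces $\lambda_A\ne\lambda_C$ in $L^\infty[0,1]$ by Remark~\ref{rem:self-adjoints-aue}. Fix a connected component $(a,b)$ of $\{F>0\}$, so $F(a)=F(b)=0$ and $F>0$ on $(a,b)$. On any interval where $\lambda_A$ is constant $F'$ is non-increasing, hence $F$ is concave; combined with $F(a)=F(b)=0<F$ on $(a,b)$ this shows that \emph{if} $\lambda_A$ is constant on $(a,b)$ \emph{then} the corresponding level set of $\lambda_A$ is exactly $[a,b)$, $\lambda_A$ jumps downward at $a$ and at $b$, and $F'(a^+)>0>F'(b^-)$. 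Using these facts, and in the complementary case (where $\lambda_A$ strictly decreases somewhere inside $(a,b)$) a direct choice, one selects $a\le a'<b'\le b$ and $\epsilon_0>0$ so that for $0<\epsilon<\epsilon_0$: every value of $h$ exceeding $\lambda_A(a')+\epsilon$ is attained by $\lambda_A$ on $[0,a')$, and every value of $h$ below $\inf_{[a',b')}\lambda_A-\epsilon$ is attained by $\lambda_A$ on $[b',1)$. Hence $h^{*}$ agrees with $\lambda_A$ on $[0,t_1)\cup[t_2,1)$, where $t_1=|\{\lambda_A>\lambda_A(a')+\epsilon\}|$ and $t_2=1-|\{\lambda_A<\inf_{[a',b')}\lambda_A-\epsilon\}|$; moreover $[t_1,t_2]\subseteq[a,b]$, with $t_1=a$ or $t_2=b$ possible only in the constant case (where $F$ vanishes to first order there). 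Producing this $[a',b')$ and $\epsilon_0$ is the crux of the argument — it is here that $A\notin\O(C)$ is genuinely used (through $F\not\equiv0$ together with the rigidity of the plateaus of $\lambda_A$ forced by the concavity of $F$), and it needs the small case split just indicated.

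\emph{The estimate.} Fix $0<\epsilon<\epsilon_0$ and an admissible $S$, with $h,h^{*}$ as above. For $t\le t_1$ or $t\ge t_2$, $\int_0^t h^{*}=\int_0^t\lambda_A\le\int_0^t\lambda_C$. For $t\in[t_1,t_2]$, since $h^{*}=\lambda_A$ outside $[t_1,t_2]$ and $\|h^{*}-\lambda_A\|_\infty\le\|h-\lambda_A\|_\infty<\epsilon$,
\[
\Bigl|\int_0^t h^{*}-\int_0^t\lambda_A\Bigr|=\Bigl|\int_{t_1}^t(h^{*}-\lambda_A)\Bigr|\le\epsilon\min(t-t_1,\,t_2-t).
\]
Since $F>0$ on $(t_1,t_2)$ and $F$ vanishes only to first order at any of $t_1,t_2$ that coincides with $a$ or $b$, the quantity $N:=\sup_{t\in[t_1,t_2]}\min(t-t_1,t_2-t)/F(t)$ is finite (the ratio being read as $0$ where the numerator vanishes and $F>0$, and as its finite limit where both vanish). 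Shrinking $\epsilon_0$ a final time to be $\le 1/N$, we obtain $\epsilon\min(t-t_1,t_2-t)\le F(t)$ on $[t_1,t_2]$, hence $\int_0^t h^{*}\le\int_0^t\lambda_A+F(t)=\int_0^t\lambda_C$. Thus $\lambda_{A+S}=h^{*}\prec\lambda_C$ for every admissible $S$, which is the assertion.
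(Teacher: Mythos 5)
The paper offers no proof of this proposition --- it is imported wholesale from \cite{DFHS2012}*{Theorem 5.3} ``by simple manipulations of functions'' --- so your argument has to stand on its own. Most of it does: the reduction is correct (taking $P=e_A([a',b'))$ a spectral projection of $A$ makes $A+S$ block-diagonal, so $\lambda_{A+S}=h^*$ for an $h$ agreeing with $\lambda_A$ off $[a',b')$, non-increasing there, with $\|h-\lambda_A\|_\infty\le\|S\|$ and unchanged integral), the identification of where $h^*$ can differ from $\lambda_A$ is correct, and the final estimate is correct \emph{granted} the two properties you assert of $t_1,t_2$, namely $[t_1,t_2]\subseteq[a,b]$ and $N<\infty$. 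The gap is exactly in the step you flag as the crux. Your dichotomy is ``$\lambda_A$ constant on $(a,b)$'' versus not, and you claim $t_1=a$ or $t_2=b$ can occur only in the constant case, so that first-order vanishing of $F$ at an endpoint of $[t_1,t_2]$ is only ever needed (and only justified) there. This is false. Take $\lambda_A$ equal to $c_+$ on $[a,u)$ and to $c_-<c_+$ on $[u,b)$, a single interior jump: this is your ``complementary'' case, yet every admissible choice of $[a',b')$ forces at least one endpoint equality --- if $a'<u$ then $\lambda_A(a')=c_+=\lambda_A(a)$ and $t_1=a$; if $a'\ge u$ then $\lambda_A$ is constant on $[b',b)$ and $t_2=b$. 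So in the complementary case the finiteness of $N$, which in your write-up rests on the false claim, is not established, and ``a direct choice'' is not a proof.

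The statement survives and your own tools suffice, but the analysis must be done at each endpoint of $(a,b)$ separately rather than globally. If $\lambda_A$ has a plateau $[a,u)$ at the left end with $u<b$, then $F$ is concave on $[a,u]$ with $F(a)=0<F(u)$, whence $F(t)\ge\frac{F(u)}{u-a}(t-a)$ on $[a,u]$: first-order vanishing at $a$ occurs outside the constant case, and one takes $a'=a$, accepts $t_1=a$, and uses the downward jump of $\lambda_A$ at $a$ to keep $t_1$ from dropping below $a$. (That jump also needs a real argument, which you omit even in the constant case: the level set being exactly $[a,b)$ does not by itself preclude $\lambda_A(a^-)=c$; one must note that otherwise $\lambda_C-\lambda_A>0$ just to the left of $a$, forcing $F<0$ there and contradicting $\lambda_A\prec\lambda_C$.) If instead $\lambda_A(s)<\lambda_A(a)$ for all $s>a$, choose $a'$ slightly inside and $\epsilon_0<\lambda_A(a)-\lambda_A(a')$, which pins $t_1\ge t_1(\epsilon_0)>a$ for all $\epsilon\le\epsilon_0$. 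Arguing symmetrically at $b$, and observing that the two plateaus can exhaust $(a,b)$ only in the constant case, one gets $N<\infty$ in all four combinations and your closing estimate then finishes the proof. As written, though, the complementary case is not handled, so the proof is incomplete.
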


\begin{proof}[\textbf{Proof of Theorem \ref{thm:C-numerical-ranges-alternate-defn}}]
Fix $C \in \fM_{\sa}$ and $T \in \fM$.
Then
\[
\{ \tau(TX) \, \mid \, X \in \fM_{\sa}, X \in \O(C)\} \subseteq V_C(T)
\]
by Remark \ref{rem:self-adjoints-aue} and Definition \ref{defn:numran}.  

For the other inclusion, fix $X \in \fM_{\sa}$ with $\lambda_X \prec \lambda_C$ and define
\[
Q_{X, C} = \{Y \in \fM_{\sa} \, \mid \, \tau(TY) = \tau(TX),\, \lambda_Y \prec \lambda_C\}.
\]
Since the linear map $Z \mapsto \tau(TZ)$ is weak$^*$-continuous,
by Theorem \ref{thm:majorization}
$Q_{X, C}$ is a non-empty (as $X \in Q_{X, C}$), convex, weak$^*$-compact subset.
Hence, by the Krein--Milman Theorem, $Q_{X, C}$ has an extreme point, say $A$.

We will show $A \in \O(C)$ to complete the proof.
To see this, suppose to the contrary that $A \notin \O(C)$.  Since $A \in Q_{X, C}$, $\lambda_A \prec \lambda_C$ so by Proposition \ref{prop:strictly-majorized-important-technical-lemma} there exists a non-zero projection $P \in \fM$ and an $\epsilon > 0$ such that $\lambda_{A + S} \prec C$ for all self-adjoint operators $S \in \fM$ with $\left\|S\right\| < \epsilon$, $S = PS = SP$, and $\tau(S) = 0$.

Consider the linear map 
\[
\psi : \{S \in \fM_{\sa} \, \mid \, S = PS = SP, \tau(S) = 0 \} \to \bC
\]
defined by $\psi(S) = \tau(TS)$.  By dimension requirements, there exists a $S \in \ker(\psi)\setminus \{0\}$.  By scaling, we obtain a non-zero $S \in \fM_{\sa}$ such that $\left\|S\right\| < \epsilon$, $S = PS = SP$, $\tau(S) = 0$, and $\tau(TS) = 0$.  By construction $A \pm S \in Q_{X, C}$ and since
\[
A  =\frac{1}{2}(A + S) + \frac{1}{2}(A-S)
\]
we obtain a contradiction to the fact that $A$ was an extreme point of $Q_{X,C}$. 
\end{proof}

With Proposition \ref{prop:algebra-doesnt-affect-nr} and Theorem \ref{thm:C-numerical-ranges-alternate-defn} complete, we obtain several important corollaries.   In fact, \cite{AA2002} went to great lengths to obtain a (multivariate) version of the following result, for which our techniques provide a quicker proof.
\begin{cor}
\label{cor:projection-nr}
Let $(\fM, \tau)$ be a type II$_1$ factor, let $T \in \fM$, and let $\alpha\in (0, 1]$.  Then
\[
V_\alpha(T) = \frac{1}{\alpha} \{ \tau(TP) \, \mid \, P \in \proj(\fM), \tau(P) = \alpha\}.
\]
\end{cor}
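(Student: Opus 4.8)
The plan is to reduce the statement to Theorem \ref{thm:C-numerical-ranges-alternate-defn} applied to a single well-chosen projection. Since $\fM$ is a type II$_1$ factor and $\alpha \in (0,1]$, I can fix a projection $P \in \proj(\fM)$ with $\tau(P) = \alpha$. By the second observation in Remark \ref{rem:Valpha} we have $\{X \in \fM_\sa \mid \lambda_X \prec \lambda_P\} = \{X \in \fM \mid 0 \le X \le I_\fM,\ \tau(X) = \alpha\}$, so comparing Definition \ref{defn:numran} with the definition of $V_\alpha$ gives immediately
\[
V_\alpha(T) = \tfrac{1}{\alpha} V_P(T).
\]

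Next I would invoke Theorem \ref{thm:C-numerical-ranges-alternate-defn} with $C = P$ to obtain $V_P(T) = \{\tau(TX) \mid X \in \fM_\sa,\ X \in \O(P)\}$. It then remains to identify $\O(P)$ with $\{Q \in \proj(\fM) \mid \tau(Q) = \alpha\}$. For this: every $U^*PU$ with $U \in \U(\fM)$ is again a projection of trace $\alpha$, so $\U(P) \subseteq \{Q \in \proj(\fM) \mid \tau(Q) = \alpha\}$; conversely any projection $Q$ with $\tau(Q) = \alpha$ has the same eigenvalue function as $P$ (namely $1$ on $[0,\alpha)$ and $0$ on $[\alpha,1)$), hence $Q$ and $P$ have the same spectral distribution, so $Q \in \O(P)$ by Remark \ref{rem:self-adjoints-aue}. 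Moreover the set $\{Q \in \proj(\fM) \mid \tau(Q) = \alpha\}$ is norm-closed, since a norm-limit of projections is a projection and $\tau$ is norm-continuous; therefore $\O(P) = \overline{\U(P)}^{\left\|\,\cdot\,\right\|} = \{Q \in \proj(\fM) \mid \tau(Q) = \alpha\}$. Note every element of $\O(P)$ is automatically self-adjoint, so the extra requirement ``$X \in \fM_\sa$'' in the displayed formula for $V_P(T)$ is vacuous here.

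Substituting this identification into $V_P(T) = \{\tau(TX) \mid X \in \O(P)\}$ and dividing by $\alpha$ yields $V_\alpha(T) = \tfrac{1}{\alpha}\{\tau(TQ) \mid Q \in \proj(\fM),\ \tau(Q) = \alpha\}$, which is the assertion. The argument is short and I expect no genuine obstacle; the only point worth stating carefully is the identification of $\O(P)$ — in particular, that the unitary orbit of a trace-$\alpha$ projection is already norm-closed and exhausts all trace-$\alpha$ projections — which rests only on comparison theory in the factor (equivalently, Remark \ref{rem:self-adjoints-aue}) together with norm-continuity of $\tau$.
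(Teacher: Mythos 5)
Your proof is correct and follows exactly the route the paper intends: the paper states Corollary \ref{cor:projection-nr} without an explicit proof, presenting it as an immediate consequence of Theorem \ref{thm:C-numerical-ranges-alternate-defn} together with the identification $V_\alpha(T)=\frac1\alpha V_P(T)$ from Remark \ref{rem:Valpha}, and your identification of $\O(P)$ with the trace-$\alpha$ projections is the (standard) detail being suppressed. No issues.
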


\begin{cor}
\label{cor:compact}
Let $(\fM, \tau)$ be a tracial von Neumann algebra,  let $T \in \fM$, and let $C \in \fM_{sa}$.  Then $V_C(T)$ is a compact set.
\end{cor}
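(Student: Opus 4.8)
The plan is to reduce to a type II$_1$ factor and then exhibit $V_C(T)$ as the continuous image of a weak$^*$-compact set. First I would use Proposition \ref{prop:algebra-doesnt-affect-nr}: embed $(\fM,\tau)$ trace-preservingly into a type II$_1$ factor $(\fR,\tau)$ (with $C, T$ carried along), so that $V_C(T)$ is unchanged. Then I would invoke Theorem \ref{thm:C-numerical-ranges-alternate-defn} to rewrite
\[
V_C(T) = \{\tau(TX) \, \mid \, X \in \fR_{\sa}, X \in \O(C)\}.
\]
The map $X \mapsto \tau(TX)$ is weak$^*$-continuous on bounded subsets of $\fR$, so it suffices to show that the relevant domain is weak$^*$-compact — but rather than fuss over whether $\O(C)$ itself is weak$^*$-compact, I would work directly with the majorization description from Definition \ref{defn:numran}, which by Theorem \ref{thm:majorization} says
\[
\{X \in \fR_{\sa} \, \mid \, \lambda_X \prec \lambda_C\} = \overline{\conv(\U(C))}^{w^*}.
\]
This set is a weak$^*$-closed, bounded (in fact norm-bounded by $\left\|C\right\|$) subset of $\fR$, hence weak$^*$-compact by the Banach--Alaoglu theorem.

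The main steps in order: (1) replace $\fM$ by a containing type II$_1$ factor via Proposition \ref{prop:algebra-doesnt-affect-nr}, so the $C$-numerical range is literally the same set of complex numbers; (2) identify the domain set $K := \{X \in \fR_{\sa} \mid \lambda_X \prec \lambda_C\}$ with $\overline{\conv(\U(C))}^{w^*}$ using Theorem \ref{thm:majorization}, and note $K$ is norm-bounded (every $X \in K$ satisfies $\left\|X\right\| \le \left\|C\right\|$ since majorization forces the spectral distribution into $[\lambda_C(1^-), \lambda_C(0)]$, or more simply since $\U(C)$ lies in the closed ball of radius $\left\|C\right\|$ and that ball is weak$^*$-closed and convex); (3) conclude $K$ is weak$^*$-compact by Banach--Alaoglu; (4) observe that $X \mapsto \tau(TX)$ is weak$^*$-continuous (it is a normal linear functional, being $\tau(T\,\cdot\,)$) and hence maps the weak$^*$-compact set $K$ onto a compact subset of $\bC$, which is exactly $V_C(T)$.

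I do not expect a serious obstacle here; the corollary is essentially a packaging of the weak$^*$-compactness already latent in Theorem \ref{thm:majorization} together with the normality of the functional $\tau(T\,\cdot\,)$. The one point that needs a word of care is the norm bound on $K$ — one should not merely assert weak$^*$-compactness of $\overline{\conv(\U(C))}^{w^*}$ without noting it sits inside a norm ball, since weak$^*$-closed sets are compact only when bounded. This is immediate because $\U(C)$ is contained in the ball $\{Y : \left\|Y\right\| \le \left\|C\right\|\}$, which is convex and weak$^*$-closed, so it contains $\overline{\conv(\U(C))}^{w^*} = K$. Alternatively, and perhaps more cleanly for a tracial von Neumann algebra that is not assumed to be a factor at the outset, one can skip Theorem \ref{thm:C-numerical-ranges-alternate-defn} entirely: after the embedding into a factor, Theorem \ref{thm:majorization} alone gives $K = \overline{\conv(\U(C))}^{w^*}$, and the rest is as above. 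Either route gives compactness in four short lines.
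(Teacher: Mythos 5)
Your proposal is correct and follows essentially the same route as the paper: reduce to a type II$_1$ factor via Proposition \ref{prop:algebra-doesnt-affect-nr}, identify $\{X \in \fM_{\sa} \mid \lambda_X \prec \lambda_C\}$ with $\overline{\conv(\U(C))}^{w^*}$ via Theorem \ref{thm:majorization}, and push it forward under the weak$^*$-continuous functional $X \mapsto \tau(TX)$. Your extra remark on the norm bound justifying weak$^*$-compactness is a point the paper's proof leaves implicit, but it is the same argument.
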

\begin{proof}
By Proposition \ref{prop:algebra-doesnt-affect-nr} we may assume that $\fM$ is a type II$_1$ factor.  Hence Theorem \ref{thm:AK-expect} implies that
\[
V_C(T) = \left\{ \tau(TX) \, \left| \, X \in \overline{\conv(\U(T))}^{w^*} \right. \right\}.
\]
As $\overline{\conv(\U(T))}^{w^*}$ is weak$^*$-compact and $\tau$ is a weak$^*$-continuous linear functional, we obtain that $V_C(T)$ is compact.
\end{proof}

\begin{cor}
Let $(\fM, \tau)$ be a tracial von Neumann algebra and let $T, C \in \fM_{sa}$.  Then $V_C(T) = V_T(C)$.
\end{cor}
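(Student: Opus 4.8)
The plan is to use Theorem \ref{thm:C-numerical-ranges-alternate-defn} to reduce the claimed symmetry $V_C(T) = V_T(C)$ to a statement about closed unitary orbits, and then exploit that $\tau$ is a trace together with the standard fact that, in a type II$_1$ factor, approximation in the closed unitary orbit is symmetric under conjugation. By Proposition \ref{prop:algebra-doesnt-affect-nr} the $C$-numerical range does not depend on the ambient tracial von Neumann algebra, so I may assume $(\fM,\tau)$ is a type II$_1$ factor. Then, since $T$ is now assumed self-adjoint, Theorem \ref{thm:C-numerical-ranges-alternate-defn} applies to \emph{both} $V_C(T)$ and $V_T(C)$, giving
\[
V_C(T) = \{\tau(TX) \mid X \in \fM_{\sa},\ X \in \O(C)\}, \qquad
V_T(C) = \{\tau(CY) \mid Y \in \fM_{\sa},\ Y \in \O(T)\}.
\]

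First I would prove the inclusion $V_C(T) \subseteq V_T(C)$. Fix $X \in \O(C)$ self-adjoint; I want to realize $\tau(TX)$ as $\tau(CY)$ for some $Y \in \O(T)$. Pick unitaries $U_n$ with $U_n^*CU_n \to X$ in norm, so $\tau(TX) = \lim_n \tau(TU_n^*CU_n) = \lim_n \tau(U_nTU_n^*\,C)$ using the trace property. The sequence $Y_n := U_nTU_n^* \in \U(T)$ lies in the norm-closed, hence weak$^*$-compact (being bounded and norm-closed... actually I should be slightly careful here — $\O(T)$ is norm-closed but I want weak$^*$-compactness), so pass to a weak$^*$-convergent subnet $Y_{n_j} \to Y$. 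Since each $Y_{n_j}$ is self-adjoint with the same spectral distribution as $T$, and $\overline{\conv(\U(T))}^{w^*} = \{Z \in \fM_{\sa} \mid \lambda_Z \prec \lambda_T\}$ by Theorem \ref{thm:majorization}, the limit $Y$ satisfies $\lambda_Y \prec \lambda_T$. Moreover $\|U_n^*CU_n - X\| \to 0$ forces $\tau(CY) = \lim_j \tau(Y_{n_j}C) = \tau(TX)$ — here I need that $\tau(Y_{n_j}C) \to \tau(YC)$, which holds by weak$^*$-convergence, while $\tau(Y_{n_j}C) = \tau(TU_{n_j}^*CU_{n_j}) \to \tau(TX)$ by norm convergence. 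Thus $\tau(TX) = \tau(CY)$ with $\lambda_Y \prec \lambda_T$, so $\tau(TX) \in V_T(C)$ by Definition \ref{defn:numran}. The reverse inclusion $V_T(C) \subseteq V_C(T)$ is identical with the roles of $C$ and $T$ swapped.

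Actually, the cleanest route avoids the subnet gymnastics entirely: by Corollary \ref{cor:compact}'s proof (invoking Theorem \ref{thm:majorization} via Theorem \ref{thm:AK-expect}... more precisely by Theorem \ref{thm:majorization} directly), in a type II$_1$ factor
\[
V_C(T) = \{\tau(TX) \mid X \in \fM_{\sa},\ \lambda_X \prec \lambda_C\}
 = \{\tau(TX) \mid X \in \overline{\conv(\U(C))}^{w^*}\},
\]
and symmetrically $V_T(C) = \{\tau(CY) \mid Y \in \overline{\conv(\U(T))}^{w^*}\}$. Now for any unitary $U$, $\tau(T U^*CU) = \tau((UTU^*)C)$, so the bilinear weak$^*$-continuous functional $(Y,Z) \mapsto \tau(YZ)$ on $\fM_{\sa} \times \fM_{\sa}$ pairs $\U(C)$-convex-combinations against $T$ exactly as it pairs $C$ against $\U(T)$-convex-combinations; taking weak$^*$-closures on either side and using separate weak$^*$-continuity in each variable yields $V_C(T) = V_T(C)$ directly.

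\textbf{The main obstacle} is purely a bookkeeping one: making sure that when I swap from conjugating $C$ to conjugating $T$, the mode of convergence (norm on the $C$-side, only weak$^*$ available on the $T$-side once I take limits) is handled so that $\tau$ of the product still converges. This is resolved by the trace identity $\tau(TU^*CU)=\tau(UTU^*C)$ holding \emph{exactly} at the finitely-supported/unitary-orbit level, so that the weak$^*$-closure need only be taken once and separate weak$^*$-continuity of $Z\mapsto\tau(TZ)$ suffices — there is no genuine difficulty, and the real content is entirely in Theorem \ref{thm:C-numerical-ranges-alternate-defn} (equivalently Theorem \ref{thm:majorization}), which makes both $C$-numerical ranges visibly symmetric functions of the pair $(C,T)$ of self-adjoints.
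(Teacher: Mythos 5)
Your proposal is correct and follows essentially the same route as the paper: reduce to a type II$_1$ factor via Proposition \ref{prop:algebra-doesnt-affect-nr}, invoke Theorem \ref{thm:C-numerical-ranges-alternate-defn}, and observe that the trace identity $\tau(TU^*CU)=\tau(UTU^*C)$ makes the set $\{\tau(TU^*CU) \mid U \text{ unitary}\}$ a common dense subset of both numerical ranges. The paper closes the argument slightly more economically — two compact sets sharing a dense subset must coincide (Corollary \ref{cor:compact}) — whereas your first route passes through weak$^*$-convergent subnets, but the content is the same.
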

\begin{proof}
By Proposition \ref{prop:algebra-doesnt-affect-nr} we may assume that $\fM$ is a type II$_1$ factor.  As $\U(T)$ is (norm-)dense in $\O(T)$ and $\U(C)$ is (norm-)dense in $\O(C)$, we obtain that
\[
\{\tau(TU^*CU) \, \mid \, U \in \fM, U \text{ a unitary}\}
\]
is dense in both $V_C(T)$ and $V_T(C)$ by Theorem \ref{thm:C-numerical-ranges-alternate-defn}.  Hence $V_C(T) = V_T(C)$ as both sets are compact by Corollary \ref{cor:compact}.
\end{proof}

Another important corollary is the continuity of the $C$-numerical range of $T$ as both $C$ and $T$ vary. For this discussion, recall that for compact subsets $X$ and $Y$ of $\bC$, the Hausdorff distance between $X$ and $Y$ is
defined to be
\[
d_H(X, Y) = \max\left\{  \sup_{x \in X} \mathrm{dist}(x, Y), \sup_{y \in Y} \mathrm{dist}(y, X) \right\}.
\]

\begin{prop}
\label{prop:continuity-in-alpha}
Let $(\fM, \tau)$ be a tracial von Neumann algebra and let $T\in \fM$.  If $C_1, C_2 \in \fM_{\sa}$, then
\begin{align*}
d_H(V_{C_1}(T), V_{C_2}(T)) \leq \left\|T\right\|\left\|C_1 - C_2\right\|.
\end{align*}
In particular, the map $C \mapsto V_C(T)$ is a continuous map from $\fM_{\sa}$ (equipped with the operator norm) to the compact, convex subsets of $\bC$ equipped with the Hausdorff distance.
\end{prop}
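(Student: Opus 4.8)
The plan is to reduce to the case of a type II$_1$ factor and then establish the set inclusion
$$\{X \in \fM_{\sa} : \lambda_X \prec \lambda_{C_1}\} \ \subseteq\ \{Y \in \fM_{\sa} : \lambda_Y \prec \lambda_{C_2}\} + \|C_1 - C_2\|\,B, \qquad B := \{R \in \fM_{\sa} : \|R\| \leq 1\};$$
applying the weak$^*$-continuous linear functional $Z \mapsto \tau(TZ)$ to this inclusion then yields the Hausdorff estimate immediately. By Proposition \ref{prop:algebra-doesnt-affect-nr} we may assume that $\fM$ is a type II$_1$ factor. By Theorem \ref{thm:majorization}, for $i = 1, 2$ the set $K_i := \{X \in \fM_{\sa} : \lambda_X \prec \lambda_{C_i}\}$ equals $\overline{\conv(\U(C_i))}^{w^*}$, which, being weak$^*$-closed and norm-bounded by $\|C_i\|$, is weak$^*$-compact.

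For the inclusion, write $D = C_1 - C_2$. For each unitary $U \in \fM$ we have $U^*C_1 U = U^*C_2 U + U^*DU$ with $\|U^*DU\| = \|D\|$, so $\U(C_1) \subseteq \U(C_2) + \|D\|\,B$. Passing to convex hulls and using the identity $\conv(A_1 + A_2) = \conv(A_1) + \conv(A_2)$ together with the convexity of $\|D\|\,B$ gives $\conv(\U(C_1)) \subseteq \conv(\U(C_2)) + \|D\|\,B \subseteq K_2 + \|D\|\,B$. Now $K_2 + \|D\|\,B$ is the image of the weak$^*$-compact set $K_2 \times \|D\|\,B$ under the addition map $\fM \oplus \fM \to \fM$, which is continuous for the weak$^*$ topologies; hence $K_2 + \|D\|\,B$ is weak$^*$-compact, in particular weak$^*$-closed. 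As it contains $\conv(\U(C_1))$, it contains $K_1 = \overline{\conv(\U(C_1))}^{w^*}$, so $K_1 \subseteq K_2 + \|D\|\,B$.

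To conclude, let $z \in V_{C_1}(T)$, say $z = \tau(TX)$ with $X \in K_1$, and write $X = Y + R$ with $Y \in K_2$ and $R = X - Y \in \fM_{\sa}$, $\|R\| \leq \|D\| = \|C_1 - C_2\|$. Then $w := \tau(TY) \in V_{C_2}(T)$ and, since $\tau$ is a state, $|z - w| = |\tau(TR)| \leq \|TR\| \leq \|T\|\,\|R\| \leq \|T\|\,\|C_1 - C_2\|$. Thus $\sup_{z \in V_{C_1}(T)} \mathrm{dist}(z, V_{C_2}(T)) \leq \|T\|\,\|C_1 - C_2\|$, and interchanging $C_1$ and $C_2$ bounds the remaining term in $d_H$ in the same way. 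The ``in particular'' assertion is then immediate, Corollary \ref{cor:compact} and Proposition \ref{prop:basic-properties}(\ref{part:convex}) ensuring that $C \mapsto V_C(T)$ indeed takes values in the compact convex subsets of $\bC$. We expect the only genuine subtlety to be the closure step in the second paragraph --- that the sum of the weak$^*$-compact sets $K_2$ and $\|D\|\,B$ is again weak$^*$-closed --- with everything else being routine bookkeeping; alternatively one could argue via the norm-closed unitary orbit description of Theorem \ref{thm:C-numerical-ranges-alternate-defn}, but since $\O(C_2)$ need not be norm-compact this still requires a weak$^*$-compactness input and is less transparent.
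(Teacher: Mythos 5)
Your proof is correct, but it takes a different route from the paper's. The paper works with the norm-closed unitary orbit description of Theorem \ref{thm:C-numerical-ranges-alternate-defn}: given $X \in \O(C_1)$ and $\epsilon>0$, it picks a unitary $U$ with $\|X - U^*C_1U\|<\epsilon$ and uses $X' = U^*C_2U \in \O(C_2)$, so the whole argument is a two-line Lipschitz estimate with an $\epsilon$ that is discarded at the end; no compactness is needed for the estimate itself (your closing remark that the orbit route ``still requires a weak$^*$-compactness input'' overstates things --- $\mathrm{dist}(\cdot,V_{C_2}(T))$ is insensitive to closure, and compactness of the numerical ranges is only needed to speak of $d_H$ at all, which both proofs take from Corollary \ref{cor:compact}). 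You instead prove the stronger, $T$-independent set inclusion $K_1 \subseteq K_2 + \|C_1-C_2\|B$ between the majorization bodies $K_i = \overline{\conv(\U(C_i))}^{w^*}$ of Theorem \ref{thm:majorization}, handling the closure by noting that the sum of two weak$^*$-compact sets is weak$^*$-compact, hence weak$^*$-closed; the Lipschitz bound for every $T$ then follows by applying $Z\mapsto\tau(TZ)$. All the steps check out (in particular $B$ is weak$^*$-compact by Banach--Alaoglu and addition is weak$^*$-continuous, so the key closure step is sound). What your approach buys is a reusable geometric statement about the sets $\{X : \lambda_X\prec\lambda_C\}$ themselves and independence from Theorem \ref{thm:C-numerical-ranges-alternate-defn} (whose proof needs Krein--Milman and Proposition \ref{prop:strictly-majorized-important-technical-lemma}); what the paper's approach buys is brevity.
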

\begin{proof}
To begin we may assume that $\fM$ is a type II$_1$ factor by Proposition \ref{prop:algebra-doesnt-affect-nr}.  Note for all $X \in \O(C_1)$ and $\epsilon > 0$ there exists an $X' \in \O(C_2)$ such that 
\[
\left\|X - X'\right\| \leq \epsilon + \left\|C_1 - C_2\right\|
\]
and thus
\[
\left| \tau(TX) - \tau(TX')\right| \leq \left\|T\right\|\left\|X - X'\right\| \leq \left\|T\right\|\left\|C_1 - C_2\right\| + \epsilon \left\|T\right\|.
\]
As one may also interchange the roles of $C_1$ and $C_2$, the result follows by Theorem \ref{thm:C-numerical-ranges-alternate-defn}.
\end{proof}

\begin{prop}
\label{prop:continuity-in-nr}
Let $(\fM, \tau)$ be a tracial von Neumann algebra, let $T, S\in \fM$, and let $C \in \fM_{\sa}$.  Then
\[
d_H(V_C(T), V_C(S)) \leq \left\|C\right\|\left\|T-S\right\|.
\]
Thus, for any fixed $C \in \fM_{sa}$, the map $T \mapsto V_C(T)$ is continuous from $\fM$ (equipped with the operator norm) to the compact, convex subsets of $\bC$ equipped with the Hausdorff distance.
\end{prop}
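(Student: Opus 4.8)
The plan is to mimic the proof of Proposition \ref{prop:continuity-in-alpha}, swapping the roles of the two variable arguments. By Proposition \ref{prop:algebra-doesnt-affect-nr}, we may assume $\fM$ is a type II$_1$ factor, so that Theorem \ref{thm:C-numerical-ranges-alternate-defn} gives
\[
V_C(T) = \{\tau(TX) \mid X \in \fM_{\sa},\ X \in \O(C)\}, \qquad V_C(S) = \{\tau(SX) \mid X \in \fM_{\sa},\ X \in \O(C)\}.
\]
Here the parametrizing set $\O(C)$ is \emph{the same} for both numerical ranges, which makes the estimate even more direct than in Proposition \ref{prop:continuity-in-alpha}: there is no need to approximate one unitary orbit by another.

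The key step is then just the Cauchy--Schwarz / trace-norm bound: for any $X \in \O(C)$ we have $\left\|X\right\| = \left\|C\right\|$ (unitary conjugation is isometric, and norm is continuous so it passes to the closed orbit), hence
\[
\left|\tau(TX) - \tau(SX)\right| = \left|\tau((T-S)X)\right| \leq \left\|(T-S)X\right\| \leq \left\|T-S\right\|\left\|X\right\| = \left\|C\right\|\left\|T-S\right\|.
\]
This shows every point $\tau(TX)$ of $V_C(T)$ is within $\left\|C\right\|\left\|T-S\right\|$ of the point $\tau(SX)$ of $V_C(S)$ (for the same witnessing $X$), so $\sup_{z \in V_C(T)} \mathrm{dist}(z, V_C(S)) \leq \left\|C\right\|\left\|T-S\right\|$. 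Interchanging the roles of $T$ and $S$ gives the reverse supremum bound, and taking the maximum yields $d_H(V_C(T), V_C(S)) \leq \left\|C\right\|\left\|T-S\right\|$. The final continuity assertion is immediate: given $T_n \to T$ in operator norm, $d_H(V_C(T_n), V_C(T)) \leq \left\|C\right\|\left\|T_n - T\right\| \to 0$, and each $V_C(T_n)$ is compact and convex by Corollary \ref{cor:compact} and Proposition \ref{prop:basic-properties}(\ref{part:convex}).

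I do not anticipate a genuine obstacle here; the only point needing a word of care is the observation that every element of the \emph{closed} orbit $\O(C)$ still has norm exactly $\left\|C\right\|$, which follows since the norm is norm-continuous and is constant equal to $\left\|C\right\|$ on the dense subset $\U(C)$. (Alternatively one can run the argument with $X \in \U(C)$, obtain density of $\{\tau(TU^*CU)\}$ in $V_C(T)$ and of $\{\tau(SU^*CU)\}$ in $V_C(S)$ as in the proof of the symmetry corollary, and then use compactness; but the closed-orbit route is cleaner.)
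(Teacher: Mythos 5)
Your proposal is correct and follows essentially the same route as the paper's own proof: reduce to a type II$_1$ factor via Proposition \ref{prop:algebra-doesnt-affect-nr}, use Theorem \ref{thm:C-numerical-ranges-alternate-defn} to parametrize both $V_C(T)$ and $V_C(S)$ by the same set $\O(C)$, and bound $\left|\tau((T-S)X)\right| \leq \left\|T-S\right\|\left\|C\right\|$. The extra remarks on why $\left\|X\right\| = \left\|C\right\|$ on the closed orbit are a harmless elaboration of what the paper leaves implicit.
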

\begin{proof}
To begin we may assume that $\fM$ is a type II$_1$ factor by Proposition \ref{prop:algebra-doesnt-affect-nr}.  For all $X \in \O(C)$, notice
\[
\left|\tau(TX) - \tau(SX)\right| \leq \left\|T-S\right\|\left\|X\right\| = \left\|T-S\right\|\left\|C\right\|.
\]
Hence the result follows by Theorem \ref{thm:C-numerical-ranges-alternate-defn}.
\end{proof}

\begin{cor}
\label{cor:aue-implies-nr-equal}
Let $(\fM, \tau)$ be a tracial von Neumann algebra and let $T, S \in \fM$.  If $T$ and $S$ are approximately unitarily equivalent, that is $S \in \O(T)$, then $V_C(T) = V_C(S)$ for all $C \in \fM_{\sa}$.
\end{cor}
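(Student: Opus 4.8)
The plan is to show the equality by reducing to the norm-continuity of $V_C$ in the operator argument, which was just established in Proposition \ref{prop:continuity-in-nr}. First I would recall what $S \in \O(T)$ means: there is a sequence of unitaries $U_n \in \fM$ with $\left\|U_n^* T U_n - S\right\| \to 0$. By part (\ref{part:unitary}) of Proposition \ref{prop:basic-properties}, $V_C(U_n^* T U_n) = V_C(T)$ for every $n$. On the other hand, Proposition \ref{prop:continuity-in-nr} gives
\[
d_H(V_C(U_n^* T U_n), V_C(S)) \leq \left\|C\right\| \left\|U_n^* T U_n - S\right\| \to 0.
\]
Combining these two facts, $d_H(V_C(T), V_C(S)) = d_H(V_C(U_n^* T U_n), V_C(S)) \to 0$; since the left-hand side does not depend on $n$, it must be $0$, and hence $V_C(T) = V_C(S)$ because both sets are compact (Corollary \ref{cor:compact}) so the Hausdorff distance separates them.

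The key steps in order are: (1) unwind the definition of approximate unitary equivalence into a sequence of unitary conjugates converging in norm; (2) invoke unitary invariance of $V_C$ to replace $V_C(U_n^* T U_n)$ by $V_C(T)$; (3) apply the norm-continuity estimate from Proposition \ref{prop:continuity-in-nr} to the pair $(U_n^* T U_n, S)$; (4) pass to the limit and use that $d_H$ is a genuine metric on compact subsets of $\bC$ to conclude equality. No part of this is delicate: everything is a one-line consequence of results already proved in this section, so I do not anticipate any real obstacle. The only thing to be slightly careful about is that the estimate in Proposition \ref{prop:continuity-in-nr} is stated for elements of $\fM$ and we are applying it in a (possibly reduced) II$_1$ factor — but since $V_C$ is independent of the ambient algebra by Proposition \ref{prop:algebra-doesnt-affect-nr}, and $U_n^* T U_n$ and $S$ lie in $\fM$ already, there is nothing to check.

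If one prefers to avoid sequences entirely, an equivalent formulation is: for every $\epsilon > 0$ choose a unitary $U$ with $\left\|U^* T U - S\right\| < \epsilon$; then $d_H(V_C(T), V_C(S)) = d_H(V_C(U^* T U), V_C(S)) \leq \left\|C\right\| \epsilon$, and letting $\epsilon \to 0$ finishes the argument. Either way, the proof is only a couple of lines.

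\begin{proof}
By Proposition \ref{prop:algebra-doesnt-affect-nr} the set $V_C$ of any element of $\fM$ does not depend on the ambient algebra, so we may work inside $\fM$ itself. Since $S \in \O(T)$, for every $\epsilon > 0$ there is a unitary $U \in \fM$ with $\left\|U^*TU - S\right\| < \epsilon$. By part (\ref{part:unitary}) of Proposition \ref{prop:basic-properties} we have $V_C(U^*TU) = V_C(T)$, and by Proposition \ref{prop:continuity-in-nr},
\[
d_H(V_C(T), V_C(S)) = d_H(V_C(U^*TU), V_C(S)) \leq \left\|C\right\|\left\|U^*TU - S\right\| < \left\|C\right\|\epsilon.
\]
Letting $\epsilon \to 0$ gives $d_H(V_C(T), V_C(S)) = 0$. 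Since $V_C(T)$ and $V_C(S)$ are compact by Corollary \ref{cor:compact}, the Hausdorff distance being zero forces $V_C(T) = V_C(S)$.
\end{proof}
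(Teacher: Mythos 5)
Your proof is correct and is exactly the argument the paper intends: its proof of this corollary is the single sentence that the result follows from part (\ref{part:unitary}) of Proposition \ref{prop:basic-properties} and Proposition \ref{prop:continuity-in-nr}, which is precisely the combination you spell out. Your version merely makes explicit the choice of approximating unitaries and the passage to the limit in the Hausdorff metric, which is a fine (and arguably more readable) way to write it.
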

\begin{proof}
The result follows from part (\ref{part:unitary}) of Proposition \ref{prop:basic-properties} and Proposition \ref{prop:continuity-in-nr}.
\end{proof}

\section{$C$-Numerical Ranges of Self-Adjoint Operators}
\label{sec:SA}

In this section, we will use eigenvalue functions to describe $V_C(T)$ when $C, T \in \fM_{\sa}$.  This will be of use in the subsequent section when developing a method for computing $C$-numerical ranges of an arbitrary operator $T$.  

To begin our description of $V_C(T)$ for all $C, T \in \fM_{\sa}$, we will assume that $C$ and $T$ are positive operators.
From the description of such $V_C(T)$, Proposition \ref{prop:basic-properties} will yield descriptions of $V_C(T)$ for all $C, T \in \fM_{\sa}$.
\begin{prop}
\label{prop:self-adjoints}
Let $(\fM, \tau)$ be a tracial von Neumann algebra and let $T, C \in \fM$ be positive.   Then
\[
V_C(T) = \left[ \int^1_0 \lambda_T(s) \lambda_C(1-s) \, ds, \int^1_{0} \lambda_T(s) \lambda_C(s)\, ds  \right].
\]

\end{prop}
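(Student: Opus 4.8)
The plan is to reduce everything to the commutative setting and then recognize the problem as a classical rearrangement inequality. First I would invoke Proposition~\ref{prop:algebra-doesnt-affect-nr} (and the remark that $C$-numerical ranges do not depend on the ambient algebra) to assume $\fM = L^\infty[0,1]$ with $\tau$ given by Lebesgue integration, with $C$ corresponding to the function $s \mapsto \lambda_C(s)$ and $T$ needing a separate treatment since $T$ need not be diagonal in the same masa as $C$. Actually the cleanest route is: by Proposition~\ref{prop:algebra-doesnt-affect-nr} I may take $\fM$ to be a type II$_1$ factor, so that by Theorem~\ref{thm:C-numerical-ranges-alternate-defn} we have $V_C(T) = \{\tau(TX) \mid X \in \O(C)\}$. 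Since $T$ is self-adjoint and $V_C(T)$ is a convex (Proposition~\ref{prop:basic-properties}(i)), compact (Corollary~\ref{cor:compact}) subset of $\bR$, it suffices to compute its maximum and minimum, namely $\sup\{\tau(TX) \mid X \in \O(C)\}$ and the corresponding infimum.

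For the upper bound I would argue that for any $X \in \O(C)$, we have $\tau(TX) \le \int_0^1 \lambda_T(s)\lambda_X(s)\,ds = \int_0^1 \lambda_T(s)\lambda_C(s)\,ds$; this is the von Neumann-type trace inequality (the analogue of the fact that $\tr(AB) \le \sum \lambda_j(A)\lambda_j(B)$ for self-adjoint matrices), which can either be cited or proved directly by a short argument using the layer-cake decomposition $T = \int \mathbf{1}_{(t,\infty)}(T)\,dt$ together with the fact that, among projections of a fixed trace, the spectral projection $1_{(t,\infty)}(X)$ maximizes $\tau(1_{(s,\infty)}(T)\,Q)$ (a Ky Fan--type extremal principle for traces). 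For the reverse inequality I would apply the same bound to $-T$, or equivalently use $\lambda_{-T}(s) = -\lambda_T(1-s)$ to get $\tau(TX) \ge \int_0^1 \lambda_T(s)\lambda_C(1-s)\,ds$. This shows $V_C(T)$ is contained in the stated interval.

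For the opposite inclusion — that both endpoints are attained, and hence by convexity the whole interval lies in $V_C(T)$ — I would use Remark~\ref{rem:L-infty} applied to $T$: realize the masa generated by the spectral projections of $T$ as a copy of $L^\infty[0,1]$ inside $\fM$ in which $T$ corresponds to $s \mapsto \lambda_T(s)$. Inside a type II$_1$ factor this masa contains, for each trace value, a projection; more usefully, it contains self-adjoint elements realizing any prescribed eigenvalue function, so in particular it contains an operator $X_+$ with $\lambda_{X_+} = \lambda_C$ (hence $X_+ \in \O(C)$) corresponding to the function $s \mapsto \lambda_C(s)$, for which $\tau(TX_+) = \int_0^1 \lambda_T(s)\lambda_C(s)\,ds$, and an operator $X_-$ corresponding to $s \mapsto \lambda_C(1-s)$, which also has the same eigenvalue function $\lambda_C$ (since $s \mapsto \lambda_C(1-s)$ is just a rearrangement), hence $X_- \in \O(C)$ and $\tau(TX_-) = \int_0^1 \lambda_T(s)\lambda_C(1-s)\,ds$. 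This exhibits both endpoints, and convexity of $V_C(T)$ finishes the proof.

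The main obstacle I expect is the trace inequality $\tau(TX) \le \int_0^1 \lambda_T(s)\lambda_X(s)\,ds$ in the von Neumann algebra setting: in finite dimensions this is classical (rearrangement / von Neumann's trace inequality), but in the II$_1$ factor setting one must either locate a precise reference in \cites{F1982, FK1986, P1985} or give the layer-cake-plus-Ky-Fan argument carefully, making sure the extremal principle ``$\sup\{\tau(PQ) \mid Q \in \proj(\fM),\ \tau(Q) = \alpha\} = \min(\tau(P),\alpha)$'' is invoked correctly and that the double integral manipulation $\int\int \tau(1_{(t,\infty)}(T)\,1_{(u,\infty)}(X))\,dt\,du$ is handled with Fubini. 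Everything else is bookkeeping with eigenvalue functions and appeals to results already established.
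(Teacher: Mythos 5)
Your proof is correct, but it takes a genuinely different route from the paper's. The paper stays entirely within the majorization definition: it uses Remark~\ref{rem:L-infty} and Proposition~\ref{prop:algebra-doesnt-affect-nr} to move the whole computation into $L^\infty[0,1]$ with $T=\lambda_T$, reduces by the continuity results to the case where $T$ and $C$ are step functions, proves a hands-on rearrangement lemma (Lemma~\ref{lem:non-increasing}, an explicit finite sequence of measure-preserving swaps) to replace an arbitrary $g$ with $\lambda_g\prec\lambda_C$ by its non-increasing rearrangement, and then finishes with an Abel-summation argument using the majorization inequalities $\sum_{k\le q} b_k(x'_k-x'_{k-1})\le\sum_{k\le q}c'_k(x'_k-x'_{k-1})$. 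You instead pass through Theorem~\ref{thm:C-numerical-ranges-alternate-defn} to replace the majorization condition by $X\in\O(C)$, and then reduce the containment $V_C(T)\subseteq[\,\cdot\,,\,\cdot\,]$ to the von Neumann/Hardy--Littlewood trace inequality $\tau(TX)\le\int_0^1\lambda_T(s)\lambda_X(s)\,ds$; your attainment argument (realizing $s\mapsto\lambda_C(s)$ and $s\mapsto\lambda_C(1-s)$ inside the copy of $L^\infty[0,1]$ generated by $T$) is essentially the same as the paper's. Your route is shorter but leans on heavier inputs: Theorem~\ref{thm:C-numerical-ranges-alternate-defn} (Krein--Milman plus the technical lemma from \cite{DFHS2012}) and an external trace inequality, which is indeed available in \cite{FK1986} (and your layer-cake sketch via $\tau(PQ)\le\min(\tau(P),\tau(Q))$ and Fubini is a valid self-contained substitute). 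Two small points to tighten: the lower bound should not be obtained by applying the positive-operator inequality to $-T$ (which is not positive) --- apply it to $\|T\|I_\fM-T$ instead, or prove the self-adjoint version directly; and note that the paper's argument handles all $X$ with $\lambda_X\prec\lambda_C$ directly, so it never needs Theorem~\ref{thm:C-numerical-ranges-alternate-defn} here, whereas your argument does (or else you would need the additional Hardy--Littlewood--P\'olya step $\int\lambda_T\lambda_X\le\int\lambda_T\lambda_C$ for $\lambda_X\prec\lambda_C$).
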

\begin{rem}
\label{rem:num-for-sa}
Note if $T, C \in \fM_{\sa}$ with $C$ positive, then we still have
\[
V_C(T) = \left[ \int^1_0 \lambda_T(s) \lambda_C(1-s) \, ds, \int^1_{0} \lambda_T(s) \lambda_C(s)\, ds  \right]
\]
by Proposition \ref{prop:basic-properties} and the fact that $\lambda_{a I_\fM + T}(s) = a + \lambda_T(s)$ for all $s \in [0,1)$ and $a \in \bR$.  
\end{rem}

To begin the proof of Proposition \ref{prop:self-adjoints}, we note by Remark \ref{rem:L-infty} and Proposition \ref{prop:algebra-doesnt-affect-nr} that we may assume $\fM = L^\infty[0,1]$ equipped with
the trace given by integration against
Lebesgue measure $m$ and that $T = \lambda_T$ as a function on $[0,1]$.  

To understand $C$-numerical ranges inside $L^\infty[0,1]$, we need to understand which functions have the same eigenvalue functions.  This returns us to the work of Hardy, Littlewood, and P\'{o}lya.

\begin{defn}[\cite{HLP1929}*{Section 10.12}]
\label{defn:rearrangement}
For a real-valued function $f \in L^\infty[0,1]$, the \emph{non-increasing rearrangement} of $f$ is the function
\[
f^*(s) = \inf \{ x \mid m(\{ t \mid f(t) \geq x \}) \leq s\} \text{ for all } s \in [0,1].
\]
\end{defn}

It is not difficult to show that if $f \in L^\infty[0,1]$, then $\lambda_f = f^*$.  Consequently $f^*$ is non-increasing, right-continuous function on $[0,1)$ that is positive when $f$ is positive.  Furthermore, if $f$ is a characteristic function, it is not difficult to see how $f^*$ is a rearrangement of $f$ into a non-increasing function.

We begin the demonstration of Proposition \ref{prop:self-adjoints} with the following.
\begin{lem}
\label{lem:non-increasing}
Let $f,g \in L^\infty[0,1]$ be non-increasing, positive, right continuous functions where $g$ is a step function.  Then
\[
\int^1_0 f(x) g(x) \, dx = \sup\left\{ \left.\int^1_0 f(x) h(x)\, dx \, \right| \, h^* = g  \right\}.
\]
and
\[
\int^1_0 f(x) g(1-x) \, dx = \inf\left\{ \left. \int^1_0 f(x) h(x)\, dx \, \right| \, h^* = g  \right\}.
\]
\end{lem}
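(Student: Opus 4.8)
The condition $h^* = g$ says precisely that $h$ is equimeasurable with $g$, and since $g$ is a positive, non-increasing, right-continuous step function we have $g^* = g$ and $g$ takes only finitely many values $v_1 > v_2 > \cdots > v_n > v_{n+1} := 0$ with half-open super-level sets; thus $g = \sum_{j=1}^n (v_j - v_{j+1})\,\mathbf{1}_{[0,s_j)}$, where $s_j = m(\{g \ge v_j\})$ and $0 < s_1 \le \cdots \le s_n \le 1$. Any $h$ with $h^* = g$ then takes the same finitely many values on sets of the same measure, so if $E_j := \{x : h(x) \ge v_j\}$ we have $m(E_j) = s_j$, the $E_j$ are nested, and $h = \sum_{j=1}^n (v_j - v_{j+1})\,\mathbf{1}_{E_j}$ almost everywhere, whence $\int_0^1 f h = \sum_{j=1}^n (v_j - v_{j+1}) \int_{E_j} f$. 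This reduces everything, layer by layer, to identifying the extreme values of $\int_E f$ over measurable $E$ of prescribed measure --- and it is exactly here that the step-function hypothesis on $g$ is used.

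The core point, which I expect to be the only real content of the proof, is the following elementary ``bathtub'' estimate: if $f \in L^\infty[0,1]$ is non-negative and non-increasing and $E \subseteq [0,1]$ is measurable with $m(E) = s$, then
\[
\int_0^s f(1-x)\,dx \;=\; \int_{(1-s,1)} f \;\le\; \int_E f \;\le\; \int_{[0,s)} f \;=\; \int_0^s f(x)\,dx.
\]
Both inequalities follow by the same symmetric-difference comparison: $m\big([0,s)\setminus E\big) = m\big(E \setminus [0,s)\big)$, and because $f$ is non-increasing every value of $f$ on $[0,s)\setminus E$ is at least every value of $f$ on $E \setminus [0,s) \subseteq [s,1)$, so $\int_{[0,s)} f - \int_E f = \int_{[0,s)\setminus E} f - \int_{E\setminus[0,s)} f \ge 0$; comparing $E$ instead with $(1-s,1)$ gives the lower bound.

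Finally I would assemble the pieces. Applying the upper bound to each layer gives $\int_0^1 f h \le \sum_j (v_j - v_{j+1}) \int_{[0,s_j)} f = \int_0^1 f(x) g(x)\,dx$, and the choice $h = g$ (legitimate since $g^* = g$) attains it, proving the first identity. Applying the lower bound to each layer gives $\int_0^1 f h \ge \sum_j (v_j - v_{j+1}) \int_{(1-s_j,1)} f = \int_0^1 f(x) g(1-x)\,dx$, and the choice $h(x) = g(1-x)$ --- equimeasurable with $g$, hence satisfying $h^* = g$ --- attains it, proving the second identity. The only remaining care is routine measure-zero bookkeeping at the jumps of $g$ and with the constant value $0$ on $[s_n,1)$ when $s_n < 1$.
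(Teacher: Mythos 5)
Your proof is correct, but it takes a genuinely different route from the paper's. You decompose $g$ (and hence any equimeasurable $h$) into finitely many layers via the layer-cake formula $g=\sum_j (v_j-v_{j+1})\mathbf{1}_{[0,s_j)}$, $h=\sum_j (v_j-v_{j+1})\mathbf{1}_{E_j}$ with $m(E_j)=s_j$, and then invoke the bathtub estimate $\int_{(1-s,1)}f\le\int_E f\le\int_{[0,s)}f$ for each layer; the symmetric-difference comparison you give for that estimate is sound, since $m([0,s)\setminus E)=m(E\setminus[0,s))$ and $f$ is non-increasing. The paper instead fixes the representation $h=\sum_k a_k\mathbf{1}_{X_k}$ on the value-level (with the $X_k$ disjoint of prescribed measures) and runs a finite sorting argument: it locates the first interval $[x_{k(h)-1},x_{k(h)})$ on which $h$ disagrees with $g$, swaps a set $Y$ of mass carrying the value $a_{k(h)}$ with a set $Z$ carrying a smaller value $a_{k'(h)}$, checks that each swap does not decrease $\int fh$, and terminates after finitely many steps at $h=g$. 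Your approach buys symmetry and brevity: the same two-line bathtub inequality yields both the supremum and the infimum statements at once (the paper proves only the supremum and asserts the infimum is ``similar''), and the extremizers $h=g$ and $h=g(1-\cdot)$ are exhibited exactly as in the paper's surrounding argument. The paper's approach buys an explicit finite chain of equimeasurable rearrangements interpolating between an arbitrary $h$ and $g$ along which the functional is monotone, mirroring the classical transposition proof of the rearrangement inequality. Your measure-zero caveats (jumps of $g$, the possible value $0$ on $[s_n,1)$) are indeed routine and do not hide a gap.
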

\begin{proof}
By the assumptions on $g$, there exists $0 = x_0 < x_1 < \cdots < x_n = 1$ and $a_1 > a_2 > \cdots > a_n \geq 0$ such that
\[
g = \sum^n_{k=1} a_k 1_{[x_{k-1}, x_k)}.
\]

Suppose $h \in L^\infty[0,1]$ is such that $h^* = g$.  By the definition of the non-increasing rearrangement (also see Remark \ref{rem:self-adjoints-aue}), there exists disjoint Borel subsets $\{X_k\}^n_{k=1}$ of $[0,1]$ such that $m\left( \bigcup^n_{k=1} X_k\right) = 1$, $m(X_k) = x_k - x_{k-1}$ for all $k$, and 
\[
h = \sum^n_{k=1} a_k 1_{X_k}.
\]
We claim that
\[
\int^1_0 f(x) h(x) \, dx \leq \int^1_0 f(x) g(x) \, dx.
\]
To see this, suppose $h \neq g$.  Let $k(h)$ be the smallest index so that 
\[
m([x_{k(h)-1}, x_{k(h)}) \setminus X_{k(h)}) > 0.
\]
By the selection of $k(h)$ and since $m\left( \bigcup^n_{k=1} X_k\right) = 1$ and $m(X_k) = x_k - x_{k-1}$, there exists a smallest $k'(h) > k(h)$ so that 
 \[
 m([x_{k(h)-1}, x_{k(h)}) \cap X_{k'(h)}) > 0.
 \]
Furthermore, $m(X_{k(h)} \setminus [x_{k(h)-1}, x_{k(h)}))  = m([x_{k(h)-1}, x_{k(h)}) \setminus X_{k(h)}) > 0$ as $m(X_k) = x_k - x_{k-1}$, and
\[
X_{k(h)} \setminus [x_{k(h)-1}, x_{k(h)}) \subseteq [x_{k(h)}, 1]
\]
by the selection of $k(h)$.  Therefore, there exists $Y \subseteq X_{k(h)} \setminus [x_{k(h)-1}, x_{k(h)})$ and $Z \subseteq [x_{k(h)-1}, x_{k(h)}) \cap X_{k'(h)}$ so that
\[
m(Y) = m(Z) = \min\{m([x_{k(h)-1}, x_{k(h)}) \setminus X_{k(h)}), m([x_{k(h)-1}, x_{k(h)}) \cap X_{k'(h)})    \}.
\]
If $X^1_{k(h)} := Z \cup (X_{k(h)} \setminus Y)$, $X^1_{k'(h)} := Y \cup (X_{k'(h)} \setminus Z)$, $X^1_k := X_k$ when $k \neq k(h), k'(h)$, and
\[
h_1 = \sum^n_{k=1} a_k 1_{X^1_k},
\]
then it is elementary to verify that $(h_1)^* = h^* = g$.   Furthermore
\begin{align*}
\int^1_0 f(x)(h_1(x) - h(x)) \, dx &= \int_{Z} f(x) (a_{k(h)} - a_{k'(h)}) \, dx + \int_Y f(x) (a_{k'(h)} - a_{k(h)}) \, dx\\
&= (a_{k(h)} - a_{k'(h)}) \left( \int_{Z} f(x)  \, dx - \int_Y f(x) \, dx   \right) \geq 0
\end{align*}
since $a_{k(h)} - a_{k'(h)} \geq 0$, $m(Z) = m(Y)$, $\sup(Z) \leq \inf(Y)$, and $f$ is a positive, non-increasing function.  

If $h_1 \neq g$, then one can repeat the above arguments with $h_1$ in place of $h$ where one necessarily has either $k(h_1) > k(h)$ or $k(h_1) = k(h)$ and $k'(h_1) > k'(h)$.  As there are a finite number of indices, one eventually constructs $h = h_0, h_1, h_2, \ldots, h_{m-1}, h_m = g$ with $(h_j)^* = g$ and
\[
\int^1_0 f(x) h_j(x) \, dx \leq \int^1_0 f(x) h_{j+1}(x) \, dx
\]
for all $j$.  Hence, as $h \in L^\infty[0,1]$ with $h^* = g$ was arbitrary, we obtain 
\[
\int^1_0 f(x) g(x) \, dx = \sup\left\{ \left.\int^1_0 f(x) h(x)\, dx \, \right| \, h^* = g  \right\}.
\]

The other equation in the statement of the result is proved using similar techniques.
\end{proof}

\begin{proof}[\textbf{Proof of Proposition \ref{prop:self-adjoints}}]
As remarked above, we may assume $\fM = L^\infty[0,1]$ and $T = \lambda_T$ under this identification.  Since the map $X \mapsto \lambda_X$ is operator-norm to $L^\infty[0,1]$-norm continuous, and since $T \mapsto V_C(T)$ and $C \mapsto V_C(T)$ are operator-norm to Hausdorff distance continuous, we may assume without loss of generality that $T$ and $C$ have finite spectrum.  Consequently, there exists $0 = x_0 < x_1 < \cdots < x_n = 1$, $t_1 \geq t_2 > \cdots > t_n \geq 0$, and $c_1 \geq c_2 \geq \cdots \geq c_n \geq 0$ such that
\[
T = \sum^n_{k=1} a_k 1_{[x_{k-1}, x_k)} \qqand \lambda_C = \sum^n_{k=1} c_k 1_{[x_{k-1}, x_k)}.
\]
As $\lambda_C \in \fM$ and
\[
\tau(T \lambda_C) = \int^1_0 \lambda_T(x) \lambda_C(x) \, dx
\]
by definition, we clearly have $\int^1_0 \lambda_T(x) \lambda_C(x) \, dx \in V_C(T)$.  Similarly,
letting  $f(x) = \lambda_C(1-x)$,  we have $f\in \fM$, $f^* = \lambda_C$, and
\[
\tau(Tf) = \int^1_0 \lambda_T(x) \lambda_C(1-x) \, dx,
\]
we clearly have $\int^1_0 \lambda_T(x) \lambda_C(1-x) \, dx \in V_C(T)$.  Since $V_C(T)$ is a compact, convex subset of $\bR$ (as $C$ and $T$ are positive), it suffices so show that
\[
\sup(V_C(T)) = \int^1_0 \lambda_T(x) \lambda_C(x) \, dx \qand \inf(V_C(T)) = \int^1_0 \lambda_T(x) \lambda_C(1-x) \, dx
\]
to complete the proof.

Suppose $g \in \fM$ is such that $\lambda_g \prec \lambda_C$ (thus $g$ is positive).  We desire to show that $\tau(Tg) \leq \tau(T \lambda_C)$.  Let $\fN$ be the von Neumann subalgebra of $\fM$ generated by the projections $\{1_{[x_{k-1}, x_k)}\}_{k=1}^n$ and let $E_{\fN} : \fM \to \fN$ be the trace-preserving conditional expectation onto $\fN$.  By Theorem \ref{thm:AK-expect}, $h = E_\fN(g) \in \fN$ is a positive operator with finite spectrum such that $\lambda_h \prec \lambda_g \prec \lambda_C$ and $\tau(Th) = \tau(Tg)$.  Hence it suffices to show $\tau(Tg) \leq \tau(T \lambda_C)$ for all $g \in \fM$ with finite spectrum and $\lambda_g \prec \lambda_C$.

For such a $g$, we may without loss of generality assume $g = g^*$ by Lemma \ref{lem:non-increasing}.  Consequently, we may assume there exists $0 = x'_0 < x'_1 < \cdots < x'_m = 1$, $a_1 \geq a_2 \geq \cdots \geq a_m \geq 0$,
$c'_1 \geq c'_2 \geq \cdots \geq c'_n \geq 0$, and $b_1 \geq b_2 \geq \cdots \geq b_m \geq 0$ such that
\[
T = \sum^m_{k=1} a'_k 1_{[x'_{k-1}, x'_k)}, \quad \lambda_C = \sum^m_{k=1} c'_k 1_{[x'_{k-1}, x'_k)}, \qand g = \sum^m_{k=1} b_k 1_{[x'_{k-1}, x'_k)}.
\]
Since $g \prec \lambda_C$, we obtain that
\begin{equation}\label{eq:bxcx}
\sum^q_{k=1} b_k (x'_{k} - x'_{k-1}) \leq \sum^q_{k=1} c'_k (x'_{k} - x'_{k-1})
\end{equation}
for all $q$ with equality when $q = m$.  Therefore,  setting $a'_{m+1} = 0$,  we have
\begin{align*}
\tau(T(\lambda_C - g)) &= \sum^m_{k=1} a'_k(c'_k - b_k)(x'_{k} - x'_{k-1})\\
&= \sum^m_{q=1}  \sum^q_{j=1}  \left(a'_q - a'_{q+1}   \right)   (c'_j - b_j)(x'_{j} - x'_{j-1}).
\end{align*}
Since $a'_q - q'_{q+1} \geq 0$ for all $q$ and $\sum^q_{j=1}  (c'_j - b_j)(x'_{j} - x'_{j-1}) \geq 0$ by  \eqref{eq:bxcx}, we obtain $\tau(T(\lambda_C - g)) \geq 0$ as desired.

The proof that 
\[
\inf(V_C(T)) = \int^1_0 \lambda_T(x) \lambda_C(1-x) \, dx
\]
follows from similar arguments.
\end{proof}

\section{A Method for Computing $C$-Numerical Ranges}
\label{sec:Computing}

In this section, we will use Proposition \ref{prop:self-adjoints} together with some additional arguments to develop a method for computing $V_C(T)$ for general $T \in \fM$.  This will enable us to show that if one knows all $\alpha$-numerical ranges of an operator $T$, one also knows all $C$-numerical ranges of $T$.   

Given an operator $T$, the main idea is to reduce the computation of the $C$-numerical range of $T$ to the $C$-numerical ranges of the real parts of rotations of $T$, which are described in terms of eigenvalue functions by Proposition \ref{prop:self-adjoints}.  This is motivated by \cite{K1951} (or see the English translation \cite{K2008}).  To begin, we will require the following functions.

\begin{nota}
For a non-empty, bounded subset $E\subseteq\bC$, let
\[
\sup (\Re (E)) =\sup\{\Re (z) \, \mid \, z\in E\} 
\]
and define $g_E:[0,2\pi)\to\bR$ by
\[
g_E(\theta)=\sup (\Re(e^{i\theta}E)).
\]
\end{nota}

\begin{prop}
\label{prop:gK}
For a non-empty, compact, convex set $K \subseteq \bC$, the function $g_K$ completely determines $K$.  In particular
\[
K=\{z\in\bC \, \mid \, \Re(e^{i\theta}z)\leq g_K(\theta) \text{ for all }\theta\in[0,2\pi)\}.
\]
\end{prop}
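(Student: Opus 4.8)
The plan is to prove the two assertions together: the displayed formula for $K$ in terms of $g_K$, from which the "completely determines" claim follows immediately. The underlying principle is the standard fact that a non-empty, compact, convex subset of $\bC \cong \bR^2$ equals the intersection of all closed half-planes containing it, which is a consequence of the Hahn--Banach separation theorem (or, concretely, the supporting hyperplane theorem in the plane).

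First I would set $K' = \{z \in \bC \mid \Re(e^{i\theta}z) \le g_K(\theta) \text{ for all } \theta \in [0,2\pi)\}$ and prove $K \subseteq K'$. This direction is immediate: for $z \in K$ and any $\theta$, by definition $e^{i\theta}z \in e^{i\theta}K$, so $\Re(e^{i\theta}z) \le \sup(\Re(e^{i\theta}K)) = g_K(\theta)$. Note also that $g_K$ is real-valued and finite since $K$ is bounded, so $K'$ is well-defined; and $K'$ is closed and convex as an intersection of closed half-planes (each set $\{z \mid \Re(e^{i\theta}z) \le g_K(\theta)\}$ being a closed half-plane).

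The substantive direction is $K' \subseteq K$. I would argue by contraposition: suppose $w \notin K$. Since $K$ is non-empty, compact, and convex, by the separation theorem applied to the compact convex set $K$ and the point $\{w\}$ there is an $\bR$-linear functional on $\bC$ strictly separating them. Every $\bR$-linear functional on $\bC$ has the form $z \mapsto \Re(\zeta z)$ for some $\zeta \in \bC$, and we may normalize $|\zeta| = 1$, writing $\zeta = e^{i\theta}$ for some $\theta \in [0,2\pi)$. Strict separation gives $\Re(e^{i\theta} w) > \sup_{z \in K} \Re(e^{i\theta}z) = g_K(\theta)$, so $w \notin K'$. Hence $K' \subseteq K$, and combined with the first inclusion, $K = K'$. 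The "completely determines" statement is then a one-line consequence: if $K_1, K_2$ are non-empty compact convex sets with $g_{K_1} = g_{K_2}$, the displayed formula forces $K_1 = K_2$.

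I do not anticipate a serious obstacle here — the only thing to be a little careful about is the identification of $\bR$-linear functionals on $\bC$ with maps $z \mapsto \Re(\zeta z)$ and the normalization to $|\zeta| = 1$, together with noting that $\sup$ over the compact set $K$ is attained so strict inequality is preserved. One could alternatively phrase the separation step using the supporting-line description of the boundary of a planar convex body, but invoking Hahn--Banach separation is cleanest and avoids case analysis about whether $K$ has interior.
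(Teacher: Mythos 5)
Your proof is correct and follows essentially the same route as the paper: the inclusion $K\subseteq K'$ is immediate from the definition of $g_K$, and the reverse inclusion is obtained by Hahn--Banach separation of a point $w\notin K$ from $K$, with the separating functional written as $z\mapsto\Re(e^{i\theta}z)$. The only cosmetic difference is that the paper first translates so that $0\in K$ before separating, a normalization your argument shows is unnecessary.
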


\begin{proof}
Let $\Psi(K)$ denote the set on the right-hand-side of the above equation.  Since $g_{w+K}(\theta)=\Re(e^{i\theta}w)+g_K(\theta)$ for all $w \in \bC$, we have
\[
\Psi(w + K) = w + \Psi(K).
\]
Thus, we may assume without loss of generality that $0 \in K$.

By definition, it is clear that $K \subseteq \Psi(K)$.  For the other inclusion, suppose $w\in K^c$.  By the Hahn-Banach Theorem there is a line separating $w$ from $K$.
This line is the solution set in $\bC$ of the equation $\Re(e^{-i\theta}z)=c$ for some $\theta\in[0,2\pi)$ and some $c\ge0$.
Thus, the line $\Re(z)=c$ separates $e^{i\theta}K$ from $e^{i\theta}w$.
Since $0\in K$, we have that $0\leq g_K(\theta)<c<\Re(e^{i\theta}w)$ so $w \notin \Psi(K)$.
\end{proof}

\begin{exam}
\label{exam:ellipse-function}
For $a,b \in \bR$ with $a,b > 0$, consider the solid ellipse
\[
K=\left\{x+iy \, \left| \, x,y \in \bR, \frac{x^2}{a^2}+\frac{y^2}{b^2}\leq 1 \right.\right\}.
\]
The parametrization of the boundary of $K$ in polar coordinates is defined by the map
\[
\theta \mapsto a \cos(\theta) + i b \sin(\theta),
\]
and from this it is elementary to verify that
\[
g_K(\theta)=\sqrt{a^2\cos^2\theta+b^2\sin^2\theta}.
\]
\end{exam}

As the $C$-numerical ranges of an operator are compact, convex subsets of $\bC$,
in order to determine them
it suffices to describe the functions $g_{V_C(T)}(\theta)$.  Furthermore, it suffices to describe $V_C(T)$ for $C$ positive by part (\ref{part:linear-in-C}) of Proposition \ref{prop:basic-properties} (otherwise we translate $C$ to be a positive operator $C'$, compute $V_{C'}(T)$, and then translate back).

\begin{method}
\label{meth:g}
Given a tracial von Neumann algebra $(\fM, \tau)$, $T \in \fM$, and a positive $C \in \fM$, by combining Propositions \ref{prop:self-adjoints} and \ref{prop:gK} we obtain a method of computing $V_C(T)$, provided we can obtain sufficient information about the distributions of the operators $\Re(e^{i\theta}T)$ for $\theta\in[0,2\pi)$.
Indeed, by Proposition \ref{prop:self-adjoints} (or, more specifically, Remark \ref{rem:num-for-sa}), we have
\[
g_{V_C(T)}(\theta) = \int^1_0 \lambda_{\Re(e^{i\theta}T)}(s) \lambda_C(s) \, ds.
\]
Thus, Proposition \ref{prop:gK} implies
\[
V_\alpha(T) = \left\{z\in\bC \, \left| \, \Re(e^{i\theta}z)\leq \int^1_0 \lambda_{\Re(e^{i\theta}T)}(s) \lambda_C(s) \, ds \text{ for all }\theta\in[0,2\pi) \right. \right\}.
\]
\end{method}

In particular, the above method works provided we can describe $\lambda_C$ and $\lambda_{\Re(e^{i\theta}T)}$ for all $\theta \in [0, 2\pi)$.  In fact, the following theorem demonstrates it suffices to know $\lambda_C$ for all $C$ in $\proj(\fM)$.

\begin{thm}
\label{thm:alpha-ranges-enough}
Let $(\fM, \tau)$ be a tracial von Neumann algebra and let $T \in \fM$.  Then $\{(C, V_C(T)) \, \mid \, C \in \fM_{sa}\}$ is determined by $\{(P, V_P(T)) \, \mid \, P \in \proj(\fM)\}$.  In particular, the $C$-numerical ranges of an operator are determined by the $\alpha$-numerical ranges of an operator.
\end{thm}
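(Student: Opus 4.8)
The plan is to show that the map $C \mapsto V_C(T)$ factors through data that is reconstructible from the knowledge of $V_P(T)$ for all projections $P$, using Method \ref{meth:g} as the organizing tool. By part (\ref{part:linear-in-C}) of Proposition \ref{prop:basic-properties}, it suffices to treat positive $C$, since for general $C \in \fM_{\sa}$ we write $C = aC' + bI_\fM$ with $C'$ positive, $a > 0$, and $V_C(T) = a V_{C'}(T) + b\tau(T)$; here $\tau(T)$ is itself recoverable as, say, $V_{I_\fM}(T) = \{\tau(T)\}$ (the case $\alpha = 1$). So fix $C \in \fM$ positive. By Method \ref{meth:g}, $V_C(T)$ is completely determined by the collection of numbers
\[
g_{V_C(T)}(\theta) = \int^1_0 \lambda_{\Re(e^{i\theta}T)}(s)\, \lambda_C(s)\, ds, \qquad \theta \in [0, 2\pi).
\]
Thus it suffices to show: for each fixed $\theta$, the number $\int_0^1 \lambda_{\Re(e^{i\theta}T)}(s)\lambda_C(s)\,ds$ is determined by the family $\{g_{V_P(T)}\}_{P \in \proj(\fM)}$, i.e. by $\{\int_0^1 \lambda_{\Re(e^{i\theta}T)}(s)\lambda_P(s)\,ds\}_{P}$.

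Write $h_\theta(s) = \lambda_{\Re(e^{i\theta}T)}(s)$, a fixed bounded non-increasing right-continuous function. For a projection $P$ with $\tau(P) = \alpha$ we have $\lambda_P = 1_{[0,\alpha)}$, so the $\alpha$-range datum gives us $\int_0^\alpha h_\theta(s)\,ds$ for every $\alpha \in [0,1]$. Hence we know the entire function $\alpha \mapsto H_\theta(\alpha) := \int_0^\alpha h_\theta(s)\,ds$ on $[0,1]$, which is the primitive of $h_\theta$; since $h_\theta$ is right-continuous, $h_\theta$ is recovered from $H_\theta$ as a one-sided derivative, but more to the point we do not even need to recover $h_\theta$ pointwise — we only need $\int_0^1 h_\theta(s)\lambda_C(s)\,ds$. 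Now $\lambda_C$ is a bounded non-increasing right-continuous function, so by the layer-cake / integration-by-parts formula we can write $\lambda_C(s) = \lambda_C(1^-) + \int_{(s,1)} d\mu_C(\text{"levels"})$; concretely, if $\lambda_C = \sum_k c_k 1_{[x_{k-1},x_k)}$ in the finite-spectrum case (to which we reduce by the norm-to-Hausdorff continuity of $C \mapsto V_C(T)$ established in Proposition \ref{prop:continuity-in-alpha}, approximating $C$ by operators of finite spectrum), then $\int_0^1 h_\theta(s)\lambda_C(s)\,ds = \sum_k c_k (H_\theta(x_k) - H_\theta(x_{k-1}))$, which is manifestly a function of the values $H_\theta(x_j)$, hence of the $\alpha$-range data. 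Taking limits gives the general positive $C$: $\int_0^1 h_\theta \lambda_C = -\int_{[0,\infty)} H_\theta(m(\{\lambda_C > t\}))\,dt$ after summation by parts, expressing everything through the primitive $H_\theta$ of $h_\theta$ evaluated at the traces $\tau(1_{(t,\infty)}(C))$.

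Assembling: from $\{(P, V_P(T))\}_{P}$ we extract, for each $\theta$, the function $H_\theta$ on $[0,1]$; from $H_\theta$ and the eigenvalue function $\lambda_C$ (which is intrinsic to $C$, not to $T$) we compute $g_{V_C(T)}(\theta)$ for all $\theta$; and from $\theta \mapsto g_{V_C(T)}(\theta)$ we recover the compact convex set $V_C(T)$ by Proposition \ref{prop:gK}. The main obstacle — really the only non-formal point — is the interchange of the finite-spectrum reduction with the reconstruction: one must check that the continuity in $C$ (Proposition \ref{prop:continuity-in-alpha}) is compatible with the claim that the \emph{reconstruction procedure} itself is continuous, i.e. that if $C_n \to C$ in norm with each $V_{C_n}(T)$ determined by the $\alpha$-data, then the limiting set $V_C(T)$ is too. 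This follows because $H_\theta$ is Lipschitz (with constant $\|h_\theta\|_\infty \le \|\Re(e^{i\theta}T)\| \le \|T\|$) uniformly in $\theta$, so the assignment $C \mapsto (\theta \mapsto \int_0^1 h_\theta \lambda_C)$ is continuous into $C[0,2\pi)$, and $g_K \mapsto K$ is continuous from (a suitable space of sublinear-type functions with) sup-norm to the Hausdorff metric. Hence the whole determination is a continuous function of the $\alpha$-data, and passing to the limit completes the proof.
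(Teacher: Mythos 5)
Your proposal is correct and follows essentially the same route as the paper: reduce to positive $C$ of finite spectrum via Proposition \ref{prop:basic-properties}(\ref{part:linear-in-C}) and Proposition \ref{prop:continuity-in-alpha}, then observe that the partial integrals $\int_0^\alpha \lambda_{\Re(e^{i\theta}T)}(s)\,ds$ are exactly the quantities $g_{V_P(T)}(\theta)$ for projections $P$ of trace $\alpha$, so that $g_{V_C(T)}(\theta)=\sum_k c_k\bigl(H_\theta(x_k)-H_\theta(x_{k-1})\bigr)$ is reconstructible. The paper merely phrases the reduction to self-adjoint $T$ up front via Method \ref{meth:g} rather than carrying the parameter $\theta$ throughout; the key identity is the same.
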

\begin{proof}
By Method \ref{meth:g} it suffices to prove the result for $T \in \fM_{\sa}$.  Furthermore, by part (\ref{part:linear-in-C}) of Proposition \ref{prop:basic-properties} and by Proposition \ref{prop:continuity-in-alpha}, it suffices to show that if $C \in \fM$ is positive with a finite spectrum, then $V_C(T)$ is determined by $\{(P, V_P(T)) \, \mid \, P \in \proj(\fM)\}$.  

As $C \in \fM$ has finite spectrum, there exists pairwise orthogonal projections $\{P_k\}^n_{k=1} \subseteq \fM$ and $c_1 > c_2 > \cdots > c_n \geq 0$ such that
\[
C = \sum^n_{k=1} c_k P_k.
\]
It is elementary to show that if $x_0 = 0$ and $x_k = x_{k-1} + \tau(P_k)$ for all $k \geq 1$, then
\[
\lambda_C = \sum^n_{k=1} c_k 1_{[x_{k-1}, x_k)}.
\]
Consequently, by Remark \ref{rem:num-for-sa},
\[
V_C(T) = \left[ \sum^n_{k=1} c_{n-k+1} \int^{x_k}_{x_{k-1}} \lambda_T(x)  \, dx,  \sum^n_{k=1} c_{k} \int^{x_k}_{x_{k-1}} \lambda_T(x)  \, dx   \right].
\]
Consequently, if one knows $\int^{x_k}_{x_{k-1}} \lambda_T(x)  \, dx$ for all $k$, then one knows $V_C(T)$.

We claim that  each $\int^{x_k}_{x_{k-1}} \lambda_T(x)  \, dx$ is determined by $\{(P, V_P(T)) \, \mid \, P \in \proj(\fM)\}$.  Indeed if $Q_m= \sum^m_{k=1} P_k$, then $Q_m$ is a projection with $\tau(Q_m) = \sum^m_{k=1} \tau(P_k) = x_m$ and
\[
\int^{x_m}_0 \lambda_T(x)\, dx = \sup(V_{Q_m}(T))
\]
by Remark \ref{rem:num-for-sa}.  Hence
\[
\int^{x_k}_{x_{k-1}} \lambda_T(x)  \, dx = \sup(V_{Q_k}(T)) - \sup(V_{Q_{k-1}}(T))
\]
for all $k$ thereby completing the  proof of the claim.
\end{proof}

\section{Further Examples}
\label{sec:Examples}

Theorem \ref{thm:alpha-ranges-enough} demonstrates the $\alpha$-numerical ranges determine all $C$-numerical ranges.
In this section, we compute the $\alpha$-numerical ranges of several operators.
Although computing the $k$-numerical ranges of a matrix is generally a hard task, there are several interesting examples of operators in II$_1$ factor whose $\alpha$-numerical ranges can be explicitly described.

We begin by noting the following.
\begin{prop}
\label{prop:distributions}
Let $(\fM_1, \tau_1)$ and $(\fM_2, \tau_2)$ be tracial von Neumann algebras, let $T_1 \in \fM_1$, and let $T_2 \in \fM_2$.  If $T_1$ and $T_2$ have the same $*$-distributions, then $V_\alpha(T_1) = V_\alpha(T_2)$ for all $\alpha \in (0, 1]$.
\end{prop}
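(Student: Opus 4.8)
The plan is to reduce everything to a statement about the self-adjoint pieces that appear in Method~\ref{meth:g}. By Method~\ref{meth:g}, the set $V_\alpha(T)$ is completely determined by the function $\theta \mapsto g_{V_\alpha(T)}(\theta) = \int_0^1 \lambda_{\Re(e^{i\theta}T)}(s)\,\lambda_{C}(s)\,ds$ where $C$ is the projection of trace $\alpha$ (rescaled by $1/\alpha$), together with Proposition~\ref{prop:gK}. Hence it suffices to show that for each fixed $\theta \in [0,2\pi)$ the self-adjoint operators $\Re(e^{i\theta}T_1)$ and $\Re(e^{i\theta}T_2)$ have the same eigenvalue function, i.e.\ the same spectral distribution.

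First I would observe that $\Re(e^{i\theta}T_j) = \tfrac12(e^{i\theta}T_j + e^{-i\theta}T_j^*)$ is a fixed self-adjoint $*$-polynomial in $T_j$ and $T_j^*$ with coefficients not depending on $j$. Since $T_1$ and $T_2$ have the same $*$-distribution, by definition all tracial moments agree: $\tau_1(w(T_1,T_1^*)) = \tau_2(w(T_2,T_2^*))$ for every word $w$. Applying this to powers of the $*$-polynomial above gives $\tau_1\big((\Re(e^{i\theta}T_1))^n\big) = \tau_2\big((\Re(e^{i\theta}T_2))^n\big)$ for all $n \geq 0$. A bounded self-adjoint operator's spectral distribution (a compactly supported probability measure on $\bR$) is determined by its moments, so $\Re(e^{i\theta}T_1)$ and $\Re(e^{i\theta}T_2)$ have the same spectral distribution, hence the same eigenvalue function by Definition~\ref{defn:spectral-scale} (and Remark~\ref{rem:self-adjoints-aue}).

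Then I would conclude: for every $\theta$, $\lambda_{\Re(e^{i\theta}T_1)} = \lambda_{\Re(e^{i\theta}T_2)}$ as functions on $[0,1)$, so the formula in Method~\ref{meth:g} yields $g_{V_\alpha(T_1)}(\theta) = g_{V_\alpha(T_2)}(\theta)$ for all $\theta \in [0,2\pi)$; since each $V_\alpha(T_j)$ is compact and convex (Corollary~\ref{cor:compact} and Proposition~\ref{prop:basic-properties}(\ref{part:convex})), Proposition~\ref{prop:gK} forces $V_\alpha(T_1) = V_\alpha(T_2)$ for all $\alpha \in (0,1]$.

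The only genuinely delicate point is making sure the notion of ``same $*$-distribution'' is exactly what is needed — namely equality of all mixed tracial moments — and that this passes to the bounded self-adjoint polynomial $\Re(e^{i\theta}T)$; once that is granted, the moment-determinacy of compactly supported measures on $\bR$ and the machinery already developed (Method~\ref{meth:g}, Proposition~\ref{prop:gK}) do the rest. A small technical remark worth including is that one may harmlessly assume the ambient algebras are diffuse (e.g.\ tensoring with a diffuse factor does not change $*$-distributions or $\alpha$-numerical ranges, by Proposition~\ref{prop:algebra-doesnt-affect-nr}), so that eigenvalue functions are defined in the first place.
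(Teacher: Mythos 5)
Your proof is correct, but it takes a genuinely different route from the paper's, which is much shorter: by Proposition \ref{prop:algebra-doesnt-affect-nr} one may assume $\fM_k = W^*(T_k)$, and equality of $*$-distributions produces a trace-preserving isomorphism $W^*(T_1) \to W^*(T_2)$ carrying $T_1$ to $T_2$, under which the defining set $\frac{1}{\alpha}\{\tau(TX) \mid 0 \leq X \leq I, \ \tau(X)=\alpha\}$ is manifestly preserved. You instead push everything through Method \ref{meth:g}: reduce to the functions $g_{V_\alpha(T_j)}$, hence to the eigenvalue functions of $\Re(e^{i\theta}T_j)$, and then match these by expanding powers of $\frac{1}{2}(e^{i\theta}T_j + e^{-i\theta}T_j^*)$ into words in $T_j, T_j^*$ and invoking moment-determinacy of compactly supported measures on $\bR$. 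This is sound and non-circular (Method \ref{meth:g} precedes Proposition \ref{prop:distributions} in the paper), and your closing remark about passing to a diffuse ambient algebra is the right way to make eigenvalue functions and trace-$\alpha$ projections available. The trade-off: the paper's isomorphism argument is essentially definitional and bypasses the computational apparatus entirely, while your argument establishes something strictly stronger --- $V_\alpha(T_1)=V_\alpha(T_2)$ already follows if $\Re(e^{i\theta}T_1)$ and $\Re(e^{i\theta}T_2)$ are equidistributed for every $\theta$, a weaker hypothesis than equality of the full $*$-distributions, and exactly the phenomenon exploited in Examples \ref{exam:Tucci} and \ref{exam:circular} to match non-normal operators with normal ones.
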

\begin{proof}
By Proposition \ref{prop:algebra-doesnt-affect-nr}, we may assume, without loss of generality, that $\fM_k = W^*(T_k)$ for $k = 1,2$.  Since $T_1$ and $T_2$ have the same $*$-distributions, there exists a trace-preserving isomorphism of $W^*(T_1)$ and $W^*(T_2)$ that sends $T_1$ to $T_2$.    This clearly implies $V_\alpha(T_1) = V_\alpha(T_2)$ for all $\alpha \in (0, 1]$, by Definition \ref{defn:numran}.
\end{proof}

Recall from the introduction that the $k$-numerical range of a normal matrix $N\in \M_n(\bC)$ with eigenvalues $\{\lambda_j\}^n_{j=1}$ is 
\[
W_k(N) = \mathrm{conv}\left(\left\{ \left. \frac{1}{k}\sum_{j \in K} \lambda_j \, \right| \, J \subseteq \{1,\ldots, n\}, |J| = k \right\}\right).
\]
 The following generalizes this result to normal operators with finite spectrum in a tracial von Neumann algebra.
\begin{prop}
\label{prop:normals-nr}
Let $(\fM, \tau)$ be a tracial von Neumann algebra, let $N \in \mathfrak{M}$ be a normal operator such that $\sigma(N) = \{\lambda_k\}^n_{k=1}$, and let $w_k = \tau(1_{\{\lambda_k\}}(N))$ for each $k \in \{1,\ldots, n\}$.
Then for each $\alpha \in (0,1]$, we have
\[
V_\alpha(N) = \left\{ \left. \frac{1}{\alpha} \sum^n_{k=1} \lambda_k t_k \, \right| \, 0 \leq t_k \leq w_k, \sum^n_{k=1} t_k = \alpha\right\}.
\]
\end{prop}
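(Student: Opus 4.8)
The plan is to use Corollary \ref{cor:projection-nr}, which says $V_\alpha(N) = \frac{1}{\alpha}\{\tau(NP) \mid P \in \proj(\fM), \tau(P) = \alpha\}$, and to compute the set $\{\tau(NP) \mid P \in \proj(\fM), \tau(P) = \alpha\}$ directly. Since $N$ is normal with finite spectrum, write $E_k = 1_{\{\lambda_k\}}(N)$, so that $\sum_{k=1}^n E_k = I_\fM$, the $E_k$ are pairwise orthogonal projections, $\tau(E_k) = w_k$, and $N = \sum_{k=1}^n \lambda_k E_k$. Then for any projection $P$ with $\tau(P) = \alpha$,
\[
\tau(NP) = \sum_{k=1}^n \lambda_k \tau(E_k P),
\]
and setting $t_k := \tau(E_k P)$ we have $0 \le t_k \le \min\{\tau(E_k), \tau(P)\} \le w_k$ (since $\tau(E_k P) = \tau(E_k P E_k)$ and $0 \le E_k P E_k \le E_k$), and $\sum_{k=1}^n t_k = \tau\big(\sum_k E_k P\big) = \tau(P) = \alpha$. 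This gives the inclusion $\subseteq$ immediately.

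For the reverse inclusion, I need: given real numbers $0 \le t_k \le w_k$ with $\sum t_k = \alpha$, produce a projection $P$ with $\tau(P) = \alpha$ and $\tau(E_k P) = t_k$ for all $k$. The idea is to work inside each $E_k \fM E_k$, which is a tracial von Neumann algebra with trace $\tau(E_k \cdot E_k)$ of total mass $w_k$. By Proposition \ref{prop:algebra-doesnt-affect-nr} we may assume $\fM$ is diffuse (indeed a II$_1$ factor), hence each $E_k \fM E_k$ is diffuse and therefore contains a subprojection $P_k \le E_k$ with $\tau(P_k) = t_k$ (this uses $t_k \le w_k$; a diffuse von Neumann algebra has projections of every trace up to its total mass). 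Set $P = \sum_{k=1}^n P_k$. The $P_k$ are pairwise orthogonal (being dominated by the orthogonal $E_k$), so $P$ is a projection with $\tau(P) = \sum t_k = \alpha$, and $\tau(E_k P) = \tau(E_k P_k) = \tau(P_k) = t_k$ since $E_j P_k = 0$ for $j \ne k$. Then $\tau(NP) = \sum_k \lambda_k t_k$, which is the desired element after multiplying by $\frac{1}{\alpha}$.

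The only real point requiring care — and the step I would flag as the main obstacle, though it is mild — is the assertion that a diffuse tracial von Neumann algebra contains a projection of any prescribed trace in $[0, \text{(total mass)}]$; this is standard (it follows, e.g., by cutting down with a copy of $L^\infty[0,1]$ as in Remark \ref{rem:L-infty}), but one must first reduce to the diffuse case via Proposition \ref{prop:algebra-doesnt-affect-nr} to guarantee it. I would also note explicitly that Proposition \ref{prop:distributions} lets us ignore which ambient algebra $N$ lives in. Finally, the set on the right-hand side is visibly the image of the compact convex polytope $\{(t_1,\dots,t_n) : 0 \le t_k \le w_k, \sum t_k = \alpha\}$ under a linear map, hence compact and convex, consistent with Corollary \ref{cor:compact} and Proposition \ref{prop:basic-properties}(\ref{part:convex}); no separate convexity argument is needed.
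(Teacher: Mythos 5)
Your proof is correct and follows essentially the same route as the paper's: both reduce to projections via Corollary \ref{cor:projection-nr}, write $\tau(NP)=\sum_{k}\lambda_k\tau(E_kP)$, and realize every admissible tuple $(t_1,\dots,t_n)$ by choosing subprojections of the spectral projections with the prescribed traces. The only (cosmetic) difference is that the paper first passes to the concrete model $L^\infty[0,1]$ via Proposition \ref{prop:distributions}, where those subprojections are simply Borel subsets of prescribed measure, whereas you stay abstract and invoke diffuseness of the corners $E_k\fM E_k$ after embedding in a II$_1$ factor.
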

\begin{proof}
Using Proposition~\ref{prop:distributions}, we may without loss of generality assume $\fM = L^\infty[0,1]$ and
\[
N = \sum^n_{k=1} \lambda_k 1_{X_k}
\]
where $\{X_k\}^n_{k=1}$ are disjoint Borel measurable sets such that $\bigcup^n_{k=1} X_k = [0,1]$ and $m(X_k) = w_k$ for all $k$ ($m$ the Lebesgue measure).

Consider the surjection
\[
\psi :\{X \subseteq [0,1] \, \mid \, X \text{ Borel}, m(X) = \alpha\} \to \left\{(t_1, \ldots, t_n) \, \left| \, 0 \leq t_k \leq w_k, \sum^n_{k=1} t_k = \alpha \right. \right\}
\]
defined by
\[
\psi(X) = (m(X \cap X_1), \ldots, m(X \cap X_n)).
\]
If $X \subseteq [0,1]$ is Borel measurable with $m(X) = \alpha$, then
\[
\tau(N 1_X) = \int_X  \sum^n_{k=1} \lambda_k 1_{X_k}(s) \, ds = \sum^n_{k=1} \lambda_k t_k
\]
where $(t_1, \ldots, t_n) = \psi(X)$.
Since every $P \in \proj(L^\infty[0,1])$ is of the form $P = 1_X$ where $X \subseteq [0,1]$ and $\tau(P) = m(X)$, the result follows,
using Corollary \ref{cor:projection-nr}.
\end{proof}

For our next example, recall that a Haar unitary is a unitary element whose spectral distribution is Haar measure on the unit circle.
\begin{exam}
\label{exam:Haar}
Let $(\fM, \tau)$ be a tracial von Neuman algebra, let $U \in \fM$ be a Haar unitary, and let $\bD$ denote the closed unit disk.
For every $\lambda \in \bC$ with $|\lambda| = 1$, $\lambda U$ and $U$ have the same spectral distribution.
Therefore, Proposition \ref{prop:distributions} implies 
\[
V_\alpha(U) = V_\alpha(\lambda U) = \lambda V_\alpha(U)
\]
for every $\alpha \in (0,1]$ and $\lambda \in \bC$ with $|\lambda| = 1$.
Since each $V_\alpha(U)$ is a compact, convex set, this implies
\[
V_\alpha(U) = r(\alpha) \bD
\]
where $r : (0, 1] \to [0, 1]$ is such that $r(\alpha) = \sup\{\Re(z) \, \mid \, z \in V_\alpha(U)\}=\sup V_\alpha(\Re(U))$
where the last equality is part~\eqref{part:real} of Proposition~\ref{prop:basic-properties}.

To compute $r(\alpha)$, note  that by Proposition \ref{prop:distributions} we may assume that
$U = (s \mapsto e^{i s}) \in L^\infty[-\pi, \pi]$, so $\Re(U)=(s\mapsto\cos(s))$ and, arguing as in the proof of Proposition~\ref{prop:self-adjoints},
we deduce
\[
r(\alpha) =  \frac{1}{2\pi \alpha} \int^{\pi\alpha}_{-\pi \alpha} \cos(s) \, ds = \frac{1}{\pi \alpha} \sin(\pi \alpha).
\]
Thus $V_\alpha(U) = \frac{1}{\pi \alpha} \sin(\pi \alpha) \bD$ for all $\alpha \in (0, 1]$.
\end{exam}

The above example exhibits a method for computing $\alpha$-numerical ranges, provided there exists sufficient symmetry.

\begin{cor}
\label{cor:radial}
Let $(\fM, \tau)$ be a diffuse tracial von Neumann algebra and suppose $T \in \fM$ is such that 
\[
V_\alpha(T) = e^{i\theta} V_\alpha(T) \text{ for all } \theta \in [0, 2\pi).
\]
Then $V_\alpha(T)$ is the closed disk centered at the origin of radius $r_\alpha(T)$, where
\[
r_\alpha(T) = \frac{1}{\alpha} \int^\alpha_0 \lambda_{\Re(T)}(s) \, ds
=\sup V_\alpha(\Re(T)).
\]
\end{cor}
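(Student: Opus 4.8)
The plan is to combine the rotational-symmetry hypothesis with Method \ref{meth:g} (equivalently, Proposition \ref{prop:gK}) exactly as in Example \ref{exam:Haar}. First I would observe that since each $V_\alpha(T)$ is a compact, convex subset of $\bC$ (Proposition \ref{prop:basic-properties}(\ref{part:convex}) and Corollary \ref{cor:compact}) which is invariant under all rotations $e^{i\theta}$, it must be a closed disk centered at the origin; its radius is therefore $r_\alpha(T) := \sup\{\Re(z) \mid z \in V_\alpha(T)\}$. By part (\ref{part:real}) of Proposition \ref{prop:basic-properties}, this supremum equals $\sup V_\alpha(\Re(T))$, which gives the second displayed equality for free.

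Next I would compute $\sup V_\alpha(\Re(T))$ via Remark \ref{rem:num-for-sa} (the self-adjoint case of Proposition \ref{prop:self-adjoints}) applied to the self-adjoint operator $\Re(T)$, using $C$ a projection of trace $\alpha$ (i.e., $\lambda_C = 1_{[0,\alpha)}$, and recalling the $\frac1\alpha$ normalization in the definition of $V_\alpha$). This yields
\[
r_\alpha(T) = \sup V_\alpha(\Re(T)) = \frac{1}{\alpha}\int_0^1 \lambda_{\Re(T)}(s)\, 1_{[0,\alpha)}(s)\, ds = \frac{1}{\alpha}\int_0^\alpha \lambda_{\Re(T)}(s)\, ds,
\]
which is precisely the asserted formula. (Here one uses that $\fM$ is diffuse so that eigenvalue functions and Proposition \ref{prop:self-adjoints} apply; if $\fM$ is not already a factor, Proposition \ref{prop:algebra-doesnt-affect-nr} lets us pass to one.)

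The only point requiring a small argument is the first step: why a compact convex rotation-invariant subset $K \subseteq \bC$ must be a closed disk centered at $0$. One clean way is Proposition \ref{prop:gK}: rotation invariance gives $g_K(\theta) = g_K(0) =: r$ for all $\theta$, whence $K = \{z \mid \Re(e^{i\theta}z) \le r \ \forall \theta\} = \{z \mid |z| \le r\}$. Alternatively one argues directly that $r\bD \subseteq K$ by convexity once $re^{i\psi} \in K$ for all $\psi$ (taking $\psi$ so that $r = \Re(e^{-i\psi}z_0)$ for an extreme point $z_0$), and $K \subseteq r\bD$ since $|z| = \Re(e^{-i\arg z} z) \le r$. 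I expect no genuine obstacle here; the statement is essentially a packaging of Example \ref{exam:Haar}'s method into a general corollary, and the main (very mild) subtlety is just keeping track of the $\frac1\alpha$ normalization and invoking Remark \ref{rem:num-for-sa} rather than Proposition \ref{prop:self-adjoints} verbatim since $\Re(T)$ need not be positive.
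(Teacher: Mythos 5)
Your proposal is correct and follows essentially the same route the paper intends: the corollary is stated as a direct packaging of the argument in Example \ref{exam:Haar} (rotation invariance plus compactness and convexity forces a disk centered at the origin, part (\ref{part:real}) of Proposition \ref{prop:basic-properties} identifies the radius as $\sup V_\alpha(\Re(T))$, and Remark \ref{rem:num-for-sa} with $C$ a projection of trace $\alpha$ gives $\frac{1}{\alpha}\int_0^\alpha \lambda_{\Re(T)}(s)\,ds$). Your explicit justification via $g_K$ of why a rotation-invariant compact convex set is a disk is a reasonable filling-in of a step the paper leaves implicit.
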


Of course, the above corollary applies whenever the $*$-distribution of $T$ is the same as the $*$-distribution of $e^{i\theta}T$
for all $\theta\in\bR$.

Using Method \ref{meth:g}, we may compute the $\alpha$-numerical ranges of several interesting operators.

\begin{exam}
\label{exam:Tucci}
Consider the infinite tensor view of the hyperfinite II$_1$ factor
\[
\fR = \bigotimes_{n\geq 1} \M_2(\bC)
\]
and consider the Tucci operator~\cite{Tu08}
\[
T = \sum_{n\geq 1} \frac{1}{2^n} (\underset{n-1\text{ times}}{\underbrace{I_2 \otimes \cdots \otimes I_2}} \otimes Q \otimes I_2 \otimes \cdots)
\]
where
$Q = \left[  \begin{smallmatrix} 0 & 1 \\ 0 & 0  \end{smallmatrix} \right]$.
This operator is quasinilpotent and generates $\fR$.
To compute $V_\alpha(T)$ for every $\alpha \in (0, 1]$, we first notice that $T$ and $e^{i\theta} T$ are approximately unitarily equivalent via the unitaries
\[
U_{n,\theta} = \left[  \begin{array}{cc} 1 & 0 \\ 0 & e^{-i\theta}  \end{array} \right] \otimes \left[  \begin{array}{cc} 1 & 0 \\ 0 & e^{-i\theta}  \end{array} \right] \otimes \cdots \otimes \left[  \begin{array}{cc} 1 & 0 \\ 0 & e^{-i\theta}  \end{array} \right] \otimes I_2 \otimes I_2 \otimes \cdots,
\]
as $U_{n,\theta}^* (e^{i\theta} T) U_{n,\theta}$ approximate $T$ in norm.  Therefore, Corollary \ref{cor:aue-implies-nr-equal} and Corollary \ref{cor:radial}  imply
\[
V_\alpha(T) = r_\alpha(T) \bD
\]
where $\bD$ denotes the closed unit disk and $r_\alpha(T)$ may be computed by as
\[
r_\alpha(T) = \sup( V_\alpha(\Re(T))).
\]

Let
\[
A_0 = \Re(Q) = \frac{1}{2}\left[  \begin{array}{cc} 0 & 1\\ 1 & 0  \end{array} \right].
\]
Then
\[
\Re(T) = \sum_{n\geq 1} \frac{1}{2^n}  (I_2 \otimes \cdots \otimes I_2 \otimes A_0 \otimes I_2 \otimes \cdots).
\]
However, since $2A_0$ is unitarily equivalent to 
\[
A =  \left[  \begin{array}{cc} 1 & 0\\ 0 & -1  \end{array} \right],
\]
we obtain that $\Re(T)$ is approximately unitarily equivalent to 
\[
S = \frac{1}{2} \sum_{n\geq 1} \frac{1}{2^n} (I_2 \otimes \cdots \otimes I_2 \otimes A \otimes I_2 \otimes \cdots).
\]
Thus, Corollary \ref{cor:aue-implies-nr-equal} implies 
\[
r_\alpha(T) = \sup( V_\alpha(S)).
\]

Notice
\[
\sum_{n=1}^2 \frac{1}{2^n} (I_2 \otimes \cdots \otimes I_2 \otimes A \otimes I_2 \otimes \cdots) = \mathrm{diag}\left(\frac{3}{4}, \frac{1}{4}, -\frac{1}{4}, -\frac{3}{4}  \right).
\]
Furthermore
\[ 
\sum_{n=1}^3 \frac{1}{2^n} (I_2 \otimes \cdots \otimes I_2 \otimes A \otimes I_2 \otimes \cdots) = \mathrm{diag}\left(\frac{7}{8}, \frac{5}{8}, \frac{3}{8}, \frac{1}{8}, -\frac{1}{8}, -\frac{3}{8}, -\frac{5}{8}, -\frac{7}{8}  \right).
\]
This pattern continues and thus  we see that the spectral scale of $S$ is
\[
\lambda_S(s) = \frac{1}{2}(1-2s).
\]
Thus,
\begin{align*}
r_\alpha(T) = \frac{1}{2\alpha} \int^\alpha_0 (1-2s) \, ds = \frac{1}{2}(1-\alpha)
\end{align*}
so
\[
V_\alpha(T) = \frac{1}{2}(1-\alpha) \bD.
\]
It is not very difficult to construct a normal operator $N$ satisfying $V_\alpha(N) = V_\alpha(T)$ for all $\alpha \in (0, 1]$, namely, having the same numerical ranges as the quasinilpotent operator $T$.
Indeed, considering the radially symmetric distribution $\nu$ on the unit disk such that $\nu(r\bD)=1-\sqrt{1-r^2}$ for $0<r<1$,
one can show that the marginal
distribution of $\nu$ is uniform measure on $[-1,1]$.
It follows that the normal operator $N$ whose trace of spectral measure is $\nu$ satisfies $\lambda_{\Re(N)}(s)=\frac12(1-2s)$ for all $s\in[0,1)$
and this implies $V_\alpha(N) = V_\alpha(T)$ for all $\alpha \in (0, 1]$.
\end{exam}

\begin{exam}
\label{exam:circular}
Recall a $(0,1)$-circular operator is an element $Z$ of a tracial von Neumann algebra of the form
\[
Z=\frac1{\sqrt 2}(X+iY),
\]
where $X$ and $Y$ are freely independent $(0,1)$-semicircular operators.
As the $*$-distribution of $Z$ is the same as the $*$-distribution of $e^{i\theta}Z$ for all $\theta\in\bR$,
Corollary \ref{cor:radial} implies 
\[
V_\alpha(Z) = r_\alpha(Z) \bD
\]
where $r_\alpha(Z) = \sup(V_\alpha(\Re(Z)))$.  Since the spectral distribution of $\Re(Z) = \frac{1}{\sqrt{2}}X$ is given by the semicircular law
\[
\frac{1}{\pi} 1_{\left[-\sqrt{2},\sqrt{2}\right]}(x)\sqrt{2-x^2},
\]
we obtain that
\[
r_\alpha(Z)=\frac{1}{\pi} \int_{h(\alpha)}^{\sqrt{2}}x\sqrt{2-x^2}\,dx=\frac{1}{3\pi\alpha}\left(2-h(\alpha)^2\right)^{3/2},
\]
where $h(\alpha) \in \left[-\sqrt{2}, \sqrt{2}\right)$ is such that
\[
\frac{1}{\pi}\int_{h(\alpha)}^{\sqrt{2}}\sqrt{2-x^2}\,dx=\alpha.
\]
Thus, 
$h$ is the inverse with respect to composition of the decreasing function $f: \left[-\sqrt{2},\sqrt{2}\right]\to[0,1]$ given by
\[
f(y)=\frac{1}{\pi}\int_y^{\sqrt{2}}\sqrt{2-x^2}\,dx=\frac{1}{2}-\frac{1}{2\pi}y\sqrt{2-y^2}-\frac1\pi\arcsin \left(\frac{y}{\sqrt{2}}\right).
\]
We note the asymptotic expansions
\begin{alignat*}{2}
f(\sqrt{2}-x)&=\frac{2^{7/4}}{3\pi}x^{3/2}-\frac1{5\pi2^{3/4}}x^{5/2}+O(x^{7/2})
&\quad&(\text{as }x\to0^+), \\[2ex]
h(\alpha)&=\sqrt{2}-\frac{(3\pi)^{2/3}}{2^{7/6}}\alpha^{2/3}-\frac{(3\pi)^{4/3}}{5(2^{23/6})}\alpha^{4/3}+O(\alpha^2)
&\quad&(\text{as }\alpha\to0^+), \\[2ex]
r_\alpha(Z)&=\sqrt{2}-\frac{3^{5/3}\pi^{2/3}}{5(2^{7/6})}\alpha^{2/3}+O(\alpha)
&\quad&(\text{as }\alpha\to0^+).
\end{alignat*}
For comparison, a $(0,1)$-circular element has norm $2$ and spectrum equal to the disk centred at the origin of radius $1$.
Note that, since the push-forward measure of the spectral distribution of the normalized Lebesgue measure on the disk of radius $\sqrt2$
onto the real axis produces the semicircular law $\frac1{\sqrt2}X$, $Z$ is an easy example of a non-normal operator such that there exists a normal operator $N$ with $V_\alpha(Z) = V_\alpha(N)$ for all $\alpha \in (0, 1]$.
\end{exam}

\begin{exam}
The quasinilpotent DT-operator $S$ was introduced in~\cite{DH04a} as one of an interesting class of operators in the free group factor $L(\bF_2)$,
that can be realized as limits of upper triangular random matrices.
As the name suggests, its spectrum is $\{0\}$, and it satisfies $\|S\|=\sqrt e$ and $\tau(S^*S)=1/2$.
In~\cite{DH04b}, it was shown that $S$ generates $L(\bF_2)$ and that $S$ has many non-trivial hyperinvariant subspaces.
Moreover, $\Re(S)=\frac12 X$, where $X$ is a $(0,1)$-semicircular operator and the $*$-distribution of $S$ is the same as that of $e^{i\theta}S$
for all $\theta\in\bR$.
Thus, the method of Corollary~\ref{cor:radial} applies, exactly as in Example~\ref{exam:circular}, to yield
\[
V_\alpha(S)=r_\alpha(S)\bD,
\]
where $r_\alpha(S)=\frac1{\sqrt2}r_\alpha(Z)$, where $r_\alpha(Z)$ is the function as defined in Example~\ref{exam:circular}.
Note that the normal measure whose distribution is uniform measure on the disk of radius $\frac1{\sqrt2}$ is has the same
$\alpha$-numerical ranges as the quasinilpotent operator $S$.
\end{exam}

\begin{exam}
As a generalization of Example \ref{exam:circular}, consider the operator
\[
T=\cos(\psi)X+i\sin(\psi)Y 
\]
where $\psi\in (0, \frac{\pi}{2})$ and $X$ and $Y$ are freely independent $(0,1)$-semicircular operators.
In particular, the case $\psi=\frac{\pi}{4}$ produces the circular operator studied in Example \ref{exam:circular}.
These elliptic variants of circular operators were studied by Larsen in \cite{L1999}, where he showed 
\begin{itemize}
\item $\left\|T\right\| = 2$, 
\item the spectrum of $T$ is $\left\{ z \in \bC \, \left| \, \frac{\Re (z)^2}{\cos^4(\psi)}+\frac{\Im (z)^2}{\sin^4(\psi)}\leq 4 \right.\right\}$, and
\item the Brown measure of $T$ is uniform distribution on its spectrum.
\end{itemize}

To determine $V_\alpha(T)$, we apply Method \ref{meth:g}.
Note that $\Re(e^{i\theta}T)$ is
\[
\cos(\psi)\cos(\theta)X-\sin(\psi)\sin(\theta)Y,
\]
which is $(0,b(\theta)^2)$-semicircular where
\[
b(\theta)= \sqrt{\cos^2(\psi)\cos^2(\theta)+\sin^2(\psi)\sin^2(\theta)}.
\]
Thus the spectral distribution of $\Re(e^{i\theta}T)$ is the same as the spectral distribution of
$\sqrt{2}\,b(\theta) \Re (Z)$, where $Z$ is the $(0,1)$-circular operator from Example \ref{exam:circular}.
Hence
\[
g_{V_\alpha(T)}(\theta)=\sqrt{2}\,r_\alpha(Z)b(\theta).
\]
Therefore, by Proposition \ref{prop:gK} and Example \ref{exam:ellipse-function},
we find
\[
V_\alpha(T)=\left\{z\in\bC\, \left| \,\frac{\Re (z)^2}{\cos^2(\psi)}+\frac{\Im (z)^2}{\sin^2(\psi)}\leq 2r_\alpha(Z)^2 \right. \right\}.
\]
It is curious, although not surprising, that
the eccentricity of the ellipse bounding $V_\alpha(T)$ is (except in the circular case $\psi=\frac{\pi}{4}$) different from the eccentricity
of the ellipse bounding the spectrum $\sigma(T)$.
\end{exam}

To complete this section, we note the following interpolation result that generalizes \cite{GS1976}*{Corollary 1}.
This enables one to obtain knowledge pertaining to one $\alpha$-numerical range based on others.
We note that further results in \cite{GS1976} also have immediate generalizations to $\alpha$-numerical ranges.
\begin{prop}
\label{prop:interpolation}
Let $(\fM, \tau)$ be a diffuse, tracial von Neumann algebra and let $T \in \fM$.  If $0 < \alpha < \beta < \gamma \leq 1$, then
\[
\frac{\alpha(\gamma - \beta)}{\beta(\gamma - \alpha)} V_\alpha(T) + \frac{\gamma(\beta  - \alpha)}{\beta(\gamma - \alpha)} V_\gamma(T) \subseteq V_\beta(T).
\]
\end{prop}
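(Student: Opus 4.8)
The plan is to argue directly from the positive-contraction description of the $\alpha$-numerical range recorded in Remark~\ref{rem:Valpha}, namely
$V_\alpha(T)=\frac1\alpha\{\tau(TX)\mid X\in\fM,\ 0\le X\le I_\fM,\ \tau(X)=\alpha\}$, rather than from the projection description of Corollary~\ref{cor:projection-nr}. This is the one genuine idea in the proof: positive contractions can be added and convexly combined while staying positive contractions, whereas projections cannot, so the interpolation becomes a one-line convexity argument.

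First I would fix arbitrary points $z_\alpha\in V_\alpha(T)$ and $z_\gamma\in V_\gamma(T)$ (both sets are nonempty, e.g.\ $\alpha I_\fM$ witnesses membership of some point of $V_\alpha(T)$) and choose witnesses $X_\alpha,X_\gamma\in\fM$ with $0\le X_\alpha,X_\gamma\le I_\fM$, $\tau(X_\alpha)=\alpha$, $\tau(X_\gamma)=\gamma$, $\tau(TX_\alpha)=\alpha z_\alpha$, and $\tau(TX_\gamma)=\gamma z_\gamma$. Next I would set $p=\frac{\gamma-\beta}{\gamma-\alpha}$ and $q=\frac{\beta-\alpha}{\gamma-\alpha}$; since $\alpha<\beta<\gamma$ both are positive and $p+q=1$.

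Then put $X=pX_\alpha+qX_\gamma$. One has $0\le X\le (p+q)I_\fM=I_\fM$, and $\tau(X)=p\alpha+q\gamma$, which a direct computation simplifies to $\beta$. Hence $\frac1\beta\tau(TX)\in V_\beta(T)$. Finally, since $\tau(TX)=p\tau(TX_\alpha)+q\tau(TX_\gamma)=p\alpha z_\alpha+q\gamma z_\gamma$, one computes $\frac1\beta\tau(TX)=\frac{p\alpha}{\beta}z_\alpha+\frac{q\gamma}{\beta}z_\gamma$, and checking that $\frac{p\alpha}{\beta}=\frac{\alpha(\gamma-\beta)}{\beta(\gamma-\alpha)}$ and $\frac{q\gamma}{\beta}=\frac{\gamma(\beta-\alpha)}{\beta(\gamma-\alpha)}$ shows this is precisely the asserted convex combination of $z_\alpha$ and $z_\gamma$. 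As $z_\alpha$ and $z_\gamma$ were arbitrary, the inclusion follows.

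I do not expect a real obstacle here; the only thing to be careful about is to resist using the projection picture, which would force one to produce compatibly nested projections realizing $z_\alpha$ and $z_\gamma$ — doable but unnecessarily delicate — whereas the positive-contraction picture makes the whole argument a short arithmetic verification. I would also note in passing that the diffuseness hypothesis is not actually used in the argument itself; it is included only for consistency, since the map $X\mapsto\tau(TX)$ and the constraints $0\le X\le I_\fM$, $\tau(X)=\beta$ are all that the proof touches.
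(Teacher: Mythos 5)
Your proof is correct and is essentially identical to the paper's own argument: both take positive-contraction witnesses $X,Y$ from the definition in Remark~\ref{rem:Valpha}, form the convex combination $Z=\frac{\gamma-\beta}{\gamma-\alpha}X+\frac{\beta-\alpha}{\gamma-\alpha}Y$, and verify $0\le Z\le I_\fM$, $\tau(Z)=\beta$, and that $\frac1\beta\tau(TZ)$ is the asserted convex combination. Nothing further is needed.
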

\begin{proof}
Let $\lambda \in V_\alpha(T)$ and let $\mu \in V_\gamma(T)$.
By definition, there exist positive contractions $X, Y \in \fM$ such that $\tau(X) = \alpha$, $\tau(Y) = \gamma$, 
\[
\lambda = \frac{1}{\alpha}\tau(TX), \qand \mu = \frac{1}{\gamma} \tau(TY).
\]
Let
\[
Z = \frac{\gamma - \beta}{\gamma - \alpha} X + \frac{\beta - \alpha}{\gamma- \alpha} Y \in \fM.
\]
It is clear that $Z$ is a positive operator such that
\[
Z \leq \frac{\gamma - \beta}{\gamma - \alpha} I_\fM + \frac{\beta - \alpha}{\gamma- \alpha} I_\fM = I_\fM
\]
and
\[
\tau(Z) = \frac{\gamma - \beta}{\gamma - \alpha} \alpha + \frac{\beta - \alpha}{\gamma- \alpha} \gamma =\beta.
\]
Finally,
\[
\frac{\alpha(\gamma - \beta)}{\beta(\gamma - \alpha)} \lambda + \frac{\gamma(\beta  - \alpha)}{\beta(\gamma - \alpha)}\mu = \frac{1}{\beta}\frac{\gamma - \beta}{\gamma - \alpha} \tau(TX) + \frac{1}{\beta} \frac{\beta - \alpha}{\gamma- \alpha} \tau(TY)
= \frac{1}{\beta} \tau(TZ) \in V_\beta(T),
\]
completing the proof.
\end{proof}

\begin{rem}
One may ask whether set equality must occur in Proposition \ref{prop:interpolation}.  Taking $T \in \fM$ to be a Haar unitary,
Example \ref{exam:Haar} implies this question asks (by letting $\gamma = 1$) whether
\[
\frac{1-\beta}{\pi(\beta - \alpha \beta)} \sin(\pi \alpha) \bD + 0 = \frac{1}{\pi \beta} \sin(\pi \beta)\bD
\]
holds for all $0 < \alpha < \beta < 1$.
As this is clearly not the case, equality need not occur in Proposition \ref{prop:interpolation}.  However, one may use \cite{AA2003-2} to demonstrate that equality occurs in Proposition \ref{prop:interpolation} when $T$ is an $n \times n$ matrix, $\alpha = \frac{k}{n}$, and $\gamma = \frac{k+1}{n}$ for some $k \in \{1, \ldots, n\}$.
\end{rem}

\section{Numerical Ranges and Diagonals}
\label{sec:Diagonals}

In this our final section, we desire description of when a scalar belongs to
the $\alpha$-numerical range of an operator based on the possible `diagonals' of an operator.  Our characterization is similar to that for $k$-numerical ranges of matrices found in \cite{FW1971}*{Theorem 2.4}.  Unfortunately, we do not obtain true `diagonals' as we do not know if one can guarantee $\A$ in the following technical lemma (whose proof is a generalization of a matricial result) is a MASA.
\begin{lem}
\label{lem:zero-diagonal}
Let $(\fM,\tau)$ be a type II$_1$ factor and let $T \in \fM$ be such that $\tau(T) = 0$.  Then there exists a diffuse abelian von Neumann subalgebra $\A$ of $\fM$ such that $E_\A(T) = 0$, where $E_\A : \fM \to \A$ is the normal conditional expectation.
\end{lem}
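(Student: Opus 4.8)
The plan is to prove this by a successive approximation argument, building the diffuse abelian subalgebra $\A$ as the weak$^*$-closure of an increasing union of finite-dimensional abelian subalgebras $\A_n$, each generated by a partition of unity into $2^n$ projections of equal trace $2^{-n}$, chosen so that the conditional expectation of $T$ onto $\A_n$ has norm tending to $0$. The starting point is the matricial fact that a trace-zero operator in $\M_2(\bC)$ — more generally, that a trace-zero operator in a II$_1$ factor — is unitarily equivalent to one with zero ``diagonal'' with respect to a given $2\times 2$ system of matrix units; I would use the averaging trick that if $\tau(T)=0$, then for a suitable unitary $U$ with $U^2 = I$ (a symmetry exchanging two halves of a projection) one has $\frac12(T + U^*TU)$ having zero expectation onto the two-point algebra, and iterating this halving produces the dyadic refinement.

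Concretely, first I would establish the one-step lemma: given any projection $P$ with $\tau(P) = \beta$ and any $S \in P\fM P$ (viewing $P\fM P$ as a II$_1$ factor with trace $\beta^{-1}\tau$), if the compressed trace of $S$ is such that we want to split $P = P_1 + P_2$ with $\tau(P_1)=\tau(P_2)$ and with $E_{\{P_1,P_2\}}(S)$ controlled, we can at least arrange $\tau(P_1 S P_1) = \tau(P_2 S P_2) = \frac12 \tau(PS P)$ by choosing $P_1, P_2$ exchanged by a symmetry in $P\fM P$ (possible since $P\fM P$ is a II$_1$ factor, hence contains such symmetries, and one can rotate so the ``diagonal'' halves match). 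Then $E_{\A_{n+1}}(T)$ restricted to each old block has the same block-trace distributed evenly, and the off-block entries are unaffected; the point is that $\|E_{\A_n}(T)\|$ — the supremum over blocks of the normalized block-traces — shrinks because each block-trace gets averaged against its reflection, and after enough halvings, combined with an approximation of $T$ by elements whose block structure stabilizes, the norm goes to zero. I would phrase the convergence using the fact (Theorem~\ref{thm:AK-expect}) that $\lambda_{E_{\A_{n+1}}(T_{\sa})} \prec \lambda_{E_{\A_n}(T_{\sa})}$ for the real and imaginary parts, so the eigenvalue functions are monotone and their limit has integral zero, forcing the limit to be zero provided the partitions become ``fine enough'' relative to the spectral data of $T$.

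The main obstacle I expect is the quantitative control: averaging $T$ against a single symmetry only kills $E(T)$ onto the two-point algebra at that scale, but refining to $2^{n+1}$ pieces reintroduces error because the new conditional expectation ``sees'' more structure of $T$; one must show the errors are summable or that a diagonal/limiting argument closes. The clean way around this is probably not to insist $E_{\A_n}(T) \to 0$ in norm directly but to work with a fixed finite approximant $T'$ of $T$ in operator norm with $\tau(T')=0$, prove the statement exactly for $T'$ (where $T'$ has, say, finite-dimensional ``range algebra'' and the halving terminates in an exact zero diagonal on a finite algebra, which one then makes diffuse by further refining arbitrarily inside each block — arbitrary refinement of a zero-block-trace block preserves zero block-trace if done by symmetry-exchanged halves), and then pass to the limit $T' \to T$ using a compactness/diagonalization argument over a sequence of approximants together with weak$^*$-compactness of the relevant orbit sets, extracting a single $\A$ that works for $T$. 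So the real work is: (1) the symmetry-exchange halving lemma in a II$_1$ factor, (2) showing arbitrary diffuse refinement can be done preserving vanishing of the expectation, and (3) the limiting argument from finite approximants to $T$; step (3), ensuring the subalgebras for different approximants can be taken compatibly so the limit algebra is genuinely diffuse and genuinely kills $T$, is where I anticipate the most care is needed.
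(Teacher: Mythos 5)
Your overall architecture (dyadic halving producing a diffuse abelian algebra as a weak$^*$ limit of finite partitions) is the same as the paper's, but the one step that carries all the content is not justified in your proposal. The halving step requires: given a projection $P$ with $\tau(TP)=0$, find $P_1\leq P$ with $\tau(P_1)=\frac12\tau(P)$ and $\tau(TP_1)=0$. Your justification --- pick $P_1,P_2$ exchanged by a symmetry and ``rotate so the diagonal halves match'' --- is an intermediate value argument along a one-parameter path of projections. That works when $T$ is self-adjoint, because $\theta\mapsto\tau(TP_1(\theta))$ is then real-valued and takes values $\tau(TP_1)$ and $\tau(TP_2)=-\tau(TP_1)$ at the endpoints. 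For general $T$ the function is complex-valued, and a continuous path in $\bC$ from $z$ to $-z$ need not pass through $0$; forcing both the real and imaginary parts to vanish simultaneously with a single real parameter is exactly the nontrivial content of Toeplitz--Hausdorff-type convexity. The paper gets this step for free from machinery it has already built: $0=\tau(T)\in V_1(T)\subseteq V_{1/2}(T)$, and Corollary~\ref{cor:projection-nr} (which rests on the extreme-point argument of Theorem~\ref{thm:C-numerical-ranges-alternate-defn}) says $V_{1/2}(T)$ is realized by projections of trace $\frac12$, yielding $P$ with $\tau(TP)=0$ exactly. Without invoking that (or reproving a two-dimensional convexity statement), your step (1) has a genuine hole.

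The second, less serious, problem is that your steps (2) and (3) --- controlling $\|E_{\A_n}(T)\|$, approximating $T$ by finite approximants $T'$, and a compactness/diagonalization to extract a single $\A$ --- are unnecessary and stem from not noticing that the halving, once available, is \emph{exact} at every stage. If each dyadic projection $P_{n,k}$ (trace $2^{-n}$) satisfies $\tau(TP_{n,k})=0$, then $E_{\A_n}(T)=\sum_k 2^n\tau(TP_{n,k})P_{n,k}=0$ identically for every $n$; refinement does not ``reintroduce error'' because the old projections are sums of the new ones and the vanishing traces persist. Since the dyadic projections span a weak$^*$-dense subalgebra of $\A$ and $E_\A(T)$ is determined by $\tau(E_\A(T)A)=\tau(TA)$ for $A\in\A$, one concludes $E_\A(T)=0$ with no limiting argument over approximants of $T$ at all. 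So the fix is to replace your rotation argument by the numerical-range input (or an equivalent convexity result) and then the rest of your scheme collapses to the paper's three-line iteration.
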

\begin{proof}
Notice $0 \in V_1(T) \subseteq V_{\frac{1}{2}}(T)$.  Hence there exists a projection $P \in \fM$ such that $\tau(P) = \frac{1}{2}$ and $\tau(TP) =0$.  Similarly, $\tau(T(I_\fM - P)) = 0$.  By repeating this argument in $P\fM P$ and $(I_\fM - P) \fM (I_\fM - P)$, we obtain four projections $\{P_k\}^4_{k=1}$ such that $P_k$ commutes with $P$ and $I_\fM - P$, $\tau(P_k) = \frac{1}{4}$, and $\tau(TP_k) = 0$ for all $k$.  By continuing to repeat the first argument on each compression and by taking the von Neumann algebra generated by these projections, the desired diffuse abelian von Neumann subalgebra of $\fM$ is obtained.
\end{proof}

\begin{prop}
Let $(\fM,\tau)$ be a type II$_1$ factor, let $T \in \fM$, and let $\alpha \in (0, 1]$.  Then $\lambda \in V_\alpha(T)$ if and only if there exists a diffuse abelian von Neumann subalgebra $\A$ of $\fM$ such that $\tau(1_{\{\lambda\}}(E_\A(T))) \geq \alpha$, where $E_\A : \fM \to \A$ is the normal conditional expectation.
\end{prop}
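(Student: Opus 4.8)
The plan is to reduce both directions to Corollary~\ref{cor:projection-nr}, which replaces the general positive contractions in the definition of $V_\alpha(T)$ by projections of trace $\alpha$, together with Lemma~\ref{lem:zero-diagonal} and the standard facts that the trace-preserving conditional expectation onto a von Neumann subalgebra of a finite factor is unique and bimodular.

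\textbf{The ``if'' direction.} Suppose $\A$ is a diffuse abelian von Neumann subalgebra of $\fM$ with $P:=1_{\{\lambda\}}(E_\A(T))$ satisfying $\tau(P)\ge\alpha$. Since $\A$ is diffuse and $P\in\A$ has strictly positive trace, I can choose a projection $Q\in\A$ with $Q\le P$ and $\tau(Q)=\alpha$. As $\A$ is abelian, $Q$ commutes with $E_\A(T)$, and since $Q\le 1_{\{\lambda\}}(E_\A(T))$ the functional calculus gives $E_\A(T)Q=\lambda Q$. Using that $E_\A$ is trace-preserving and $\A$-bimodular,
\[
\tau(TQ)=\tau(E_\A(TQ))=\tau(E_\A(T)Q)=\lambda\tau(Q)=\lambda\alpha,
\]
so $\lambda=\tfrac1\alpha\tau(TQ)\in V_\alpha(T)$ by Corollary~\ref{cor:projection-nr}.

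\textbf{The ``only if'' direction.} Suppose $\lambda\in V_\alpha(T)$. By Corollary~\ref{cor:projection-nr} there is a projection $P\in\fM$ with $\tau(P)=\alpha$ and $\tau(TP)=\alpha\lambda$. The corner $P\fM P$ is a type II$_1$ factor with normalized trace $\tau_P=\tau(P)^{-1}\tau|_{P\fM P}$, and $\tau_P(PTP-\lambda P)=\tau(P)^{-1}(\tau(TP)-\lambda\tau(P))=0$. Applying Lemma~\ref{lem:zero-diagonal} inside $P\fM P$ yields a diffuse abelian subalgebra $\A_1\subseteq P\fM P$ with $E_{\A_1}(PTP)=\lambda P$ (the conditional expectation fixes the scalar multiple $\lambda P$ of the unit of $P\fM P$). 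If $P=I_\fM$ put $\A=\A_1$; otherwise choose any diffuse abelian subalgebra $\A_2$ of the type II$_1$ factor $(I_\fM-P)\fM(I_\fM-P)$ and put $\A=\A_1\oplus\A_2$, a diffuse abelian subalgebra of $\fM$ with $P\in\A$. Because $P\in\A$ and $E_\A$ is $\A$-bimodular, $E_\A(T)$ commutes with $P$, so the off-diagonal corners $PT(I_\fM-P)$ and $(I_\fM-P)TP$ contribute nothing to $E_\A(T)$; and by uniqueness of trace-preserving conditional expectations $E_\A$ restricts to $E_{\A_1}$ on $P\fM P$ and to $E_{\A_2}$ on $(I_\fM-P)\fM(I_\fM-P)$. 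Hence
\[
E_\A(T)=\lambda P+B,\qquad B:=E_{\A_2}\bigl((I_\fM-P)T(I_\fM-P)\bigr)\in(I_\fM-P)\fM(I_\fM-P).
\]
Since $B$ is supported on the corner orthogonal to $P$, we get $E_\A(T)P=\lambda P$, whence $P\le 1_{\{\lambda\}}(E_\A(T))$ (the spectral projection being normal, as $E_\A(T)$ lies in the abelian algebra $\A$), and therefore $\tau(1_{\{\lambda\}}(E_\A(T)))\ge\tau(P)=\alpha$.

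\textbf{Main obstacle.} The only substantive ingredient is Lemma~\ref{lem:zero-diagonal}, applied in the subfactor $P\fM P$; once $E_{\A_1}(PTP)=\lambda P$ is secured, everything else is bookkeeping with the $2\times 2$ corner decomposition determined by $P$. I expect the delicate points to be purely organizational: verifying that $\A_1\oplus\A_2$ is genuinely diffuse, that the restrictions of $E_\A$ to the two corners coincide with $E_{\A_1}$ and $E_{\A_2}$ (by uniqueness), and keeping the three normalizations of the trace straight ($\tau$ on $\fM$, $\tau_P$ on $P\fM P$, and the normalized trace on $(I_\fM-P)\fM(I_\fM-P)$). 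None of these presents a real difficulty.
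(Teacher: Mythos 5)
Your proof is correct and follows essentially the same route as the paper's: reduce to projections via Corollary~\ref{cor:projection-nr}, apply Lemma~\ref{lem:zero-diagonal} in the corner $P\fM P$, and assemble $\A=\A_1\oplus\A_2$. The only (cosmetic) differences are that you cut down to a subprojection of trace exactly $\alpha$ in the ``if'' direction where the paper invokes $V_\beta(T)\subseteq V_\alpha(T)$, and you translate by $\lambda P$ inside the corner where the paper first normalizes to $\lambda=0$ globally.
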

\begin{proof}
Suppose $\A$ a diffuse abelian von Neumann subalgebra of $\fM$ such that $\beta:=\tau(1_{\{\lambda\}}(E_\A(T))) \geq \alpha$.  Thus 
\[
\lambda = \tau(E_\A(T) 1_{\{\lambda\}}(E_\A(T))) = \tau(T 1_{\{\lambda\}}(E_\A(T))) \in  V_\beta(E_\A(T))\subseteq V_\alpha(E_\A(T)).
\]
(See Remark~\ref{rem:Valpha}.)

For the converse direction, suppose $\lambda \in V_\alpha(T)$.  By part (\ref{part:rotate-translate}) of Proposition \ref{prop:basic-properties}, we may  without loss of generality assume that $\lambda = 0$.  Since $0 \in V_\alpha(T)$, by Corollary \ref{cor:projection-nr} there exists a projection $P$ of trace $\alpha$ such that $\frac{1}{\alpha} \tau(TP) = 0$.  Hence $\tau_{P\fM P}(PTP) = 0$ where $\tau_{P\fM P}$ is the trace for $P\fM P$.  By Lemma \ref{lem:zero-diagonal} there exists a diffuse abelian von Neumann subalgebra $\A_0$ of $P\fM P$ such that $E_{\A_0}(PTP) = 0$.  If $\A'$ is any diffuse abelian von Neumann subalgebra of $(I_\fM - P) \fM (I_\fM - P)$, then $\A = \A_0 \oplus \A' \subseteq \fM$ is a diffuse abelian von Neumann subalgbebra containing $P$ such that $E_\A(T)P = 0$. Hence $\tau(1_{\{\lambda\}}(E_\A(T))) \geq \alpha$ as desired.
\end{proof}

\end{document}